\newtheoremstyle{mydefinition}
{}{}
{\normalfont}{}
{\bfseries}{}
{\newline}{\thmname{#1}\thmnumber{\ #2}\thmnote{\quad#3}.}
\newtheoremstyle{mydefinition2}
{}{}
{\normalfont}{}
{\bfseries}{}
{\newline}{\thmname{#1}\thmnumber{\ #2}\thmnote{\quad#3}}
\theoremstyle{mydefinition}
\newtheorem{Thm}{Theorem}
\newtheorem{Prop}[Thm]{Proposition}
\newtheorem{Lem}[Thm]{Lemma}
\theoremstyle{mydefinition2}
\newtheorem{Lem2}[Thm]{Lemma}
\newtheorem*{Lem*}{Lemma}
\newcommand{\opnorm}{\@ifstar\@opnorms\@opnorm}
\newcommand{\@opnorms}[1]{
\left|\mkern-1.5mu\left|\mkern-1.5mu\left|#1
\right|\mkern-1.5mu\right|\mkern-1.5mu\right|}
\newcommand{\@opnorm}[2][]{
\mathopen{#1|\mkern-1.5mu#1|\mkern-1.5mu#1|}#2
\mathclose{#1|\mkern-1.5mu#1|\mkern-1.5mu#1|}}
{\end{smallmatrix}\right)}
{\end{smallmatrix}\right]}
{\end{smallmatrix}\right\}}
\begin{document}
\newcounter{tmpcnt}
\setcounter{tmpcnt}{1}
\journal{arXiv}
\begin{frontmatter}
\title{Local well-posedness of the complex Ginzburg-Landau Equation in general domains}
\author[add1]{Takanori Kuroda}
\ead{1d\_est\_quod\_est@ruri.waseda.jp}
\author[add2,fn2]{Mitsuharu \^Otani}
\ead{otani@waseda.jp}
\fntext[fn2]{Partly supported by the Grant-in-Aid for Scientific Research, \#18K03382, the Ministry of Education, Culture, Sports, Science and Technology, Japan.}
\address[add1]{Department of Mathematics, School of Science and Engineering, \\ Waseda University, 3-4-1 Okubo Shinjuku-ku, Tokyo, 169-8555, JAPAN}
\address[add2]{Department of Applied Physics, School of Science and Engineering, \\ Waseda University, 3-4-1 Okubo Shinjuku-ku, Tokyo, 169-8555, JAPAN}
\begin{abstract}
In this paper, complex Ginzburg-Landau (CGL) equations with superlinear growth terms are studied.
We discuss the local well-posedness in the energy space \({\rm H}^1\) for the initial-boundary value problem of the equations in general domains.
The local well-posedness in \({\rm H}^1\) in bounded domains is already examined by authors (2019).
Our approach to CGL equations is based on the theory of parabolic equations governed by subdifferential operators with non-monotone perturbations.
By using this method together with the Yosida approximation procedure, we discuss the existence and the uniqueness of local solutions as well as the global existence of solutions with small initial data.
\end{abstract}
\begin{keyword}
initial boundary value problem\sep
local well-posedness\sep
complex Ginzburg-Landau equation\sep
unbounded general domain\sep
subdifferential operator
\MSC[2010]
35Q56\sep 
47J35\sep 
35K61 
\end{keyword}
\end{frontmatter}
\section{Introduction}\label{sec-1}
In this paper we are concerned with the following complex Ginzburg-Landau equation in a general domain \(\Omega \subset \mathbb{R}^N\) with smooth boundary \(\partial \Omega\):\vspace{-1mm}
\begin{equation}
\tag*{(CGL)}
\left\{
\begin{aligned}
&\partial_t u(t, x) \!-\! (\lambda \!+\! i\alpha)\Delta u \!-\! (\kappa \!+\! i\beta)|u|^{q-2}u \!-\! \gamma u \!=\! f(t, x)
&&\hspace{-2mm}\mbox{in}\ (t, x) \in [0, T] \times \Omega,\\
&u(t, x) = 0&&\hspace{-2mm}\mbox{on}\ (t, x) \in [0, T] \times \partial\Omega,\\
&u(0, x) = u_0(x)&&\hspace{-2mm}\mbox{in}\ x \in \Omega,
\end{aligned}
\right.
\end{equation}
where \(\lambda, \kappa > 0\), \(\alpha, \beta, \gamma \in \mathbb{R}\) are parameters; 
\(i = \sqrt{-1}\) denotes the imaginary unit; 
\(u_0: \Omega \rightarrow \mathbb{C}\) is a given initial value; 
\(f: \Omega \times [0, T] \rightarrow \mathbb{C}\) (\(T > 0\)) is a given external force.
Our unknown function \(u:\overline{\Omega} \times [0,\infty) \rightarrow \mathbb{C}\) is complex valued.
Under the suitable assumption on \(q\), that is, the Sobolev subcritical condition on \(q\), we establish the local well-posedness of (CGL) for \(u_0 \in {\rm H}^1\).

As extreme cases, (CGL) corresponds to two well-known equations: 
semi-linear heat equations (when \(\alpha=\beta=0\)) and nonlinear Schr\"odinger equations (when \(\lambda=\kappa=0\)).
Thus for the general case, (CGL) could be regarded as an ``intermediate'' equation between these two equations.

As for the case where \(\kappa < 0\), equation (CGL) was introduced by Landau and Ginzburg in 1950 \cite{GL1} as a mathematical model for superconductivity.
Subsequently, it was revealed that many nonlinear partial differential equations arising from physics can be rewritten in the form of (CGL) (\cite{N1}).

Mathematical studies for the case where \(\kappa < 0\) are pursued extensively by several authors.
The first treatment is due to Temam \cite{T1}, where weak global solutions were constructed by the Galerkin method.
Ginibre-Velo \cite{GinibreJVeloG1996} showed the existence of global strong solutions for (CGL) in whole space \(\mathbb{R}^N\) under some suitable conditions on \(\lambda,\kappa,\alpha,\beta\) in terms of \(q\) with initial data taken form \(\mathrm{H}^1(\mathbb{R}^N)\cap\mathrm{L}^q(\mathbb{R}^N)\).
They approximated nonlinear terms by the mollifier and then established a priori estimates to obtain solutions of (CGL).
Okazawa-Yokota \cite{OY1} regarded (CGL) as parabolic equations with perturbations governed by maximal monotone operators in complex Hilbert spaces and showed the existence of global solutions together with some smoothing effects.
The global existence of solutions of (CGL) in general (unbounded) domains is investigated in \cite{KOS1}, where (CGL) is regarded as parabolic equations with monotone and non-monotone perturbations governed by subdifferential operators in the product space of real Hilbert spaces.

For the case where \(\kappa > 0\), the blow-up phenomenon of solutions was first shown by Masmoudi-Zaag \cite{MZ1} and Cazenave et al. \cite{CDW1,CDF1}.
As for the local well-posedness, Shimotsuma-Yokota-Yoshii \cite{SYY1} discussed in \({\rm L}^p\) for various kinds of domains via a suitable estimate for the heat kernel; the local well-posedness in \({\rm H}_0^1\) in bounded domains is studied in \cite{KO2} by the non-monotone perturbation theory of nonlinear parabolic equations governed by subdifferential operators developed in \cite{O2}.

In this paper, we show the local well-posedness in the energy space \({\rm H}_0^1\) for general domains.
We first regard (CGL) as an abstract evolution equation governed by subdifferential operators in the product Hilbert space \(({\rm L}^2(\Omega))^2\) over real coefficients.
Our previous work \cite{KO2} essentially relies on the compactness argument, which is guaranteed by boundedness of domains with the aid of Rellich-Kondrachov's theorem.
The heat kernel for \(-(\lambda+i\alpha)\Delta\) is constructed for general domains and examined in \cite{SYY1} for \({\rm L}^p\) (\(1<p<\infty\)) spaces.
However, estimates for derivatives of heat kernels for this elliptic operator with complex coefficients in general domains are not obtained.
Hence we can not apply this approach.
Here we propose a different strategy, i.e., we first introduce an auxiliary equation, which is (CGL) added with a dissipative term \(\varepsilon|u|^{r-2}u\) (\(r>q,\varepsilon>0\)) to dominate non-monotone terms \(-(\kappa+i\beta)|u|^{q-2}u\).
To show the global well-posedness of solutions for this auxiliary equation, we make use of the Yosida approximation instead of the compactness argument.
By establishing a priori estimates of solutions \(u_\varepsilon\) of auxiliary equations independent of \(\varepsilon>0\) and letting \(\varepsilon\) tend to \(0\), we show that \(u_\varepsilon\) converges to our desired solution.

In addition to the local existence result, we also give related results concerning global solutions.

This  paper consists of six sections.
In \S2, we fix some notations and prepare some preliminaries.
Main results are stated in \S3.
In \S4, we introduce auxiliary problems for (CGL) and show their global well-posedness.
The local well-posedness of (CGL) is discussed in \S5 as well as the alternative concerning the asymptotic behavior of solutions, i.e., blow-up or global existence of solutions.
The last section \S6 is devoted to the study of the existence of small global solutions.
\section{Notations and Preliminaries}\label{sec-2}
In this section, we fix some notations in order to formulate (CGL) as an evolution equation in a real product function space based on the following identification:
\[
\mathbb{C} \ni u_1 + iu_2 \mapsto (u_1, u_2)^{\rm T} \in \mathbb{R}^2.
\]
Then define the following:
\[
\begin{aligned}
&(U \cdot V)_{\mathbb{R}^2} :=  u_1 v_1 + u_2 v_2,\quad |U|=|U|_{\mathbb{R}^2}, \qquad U=(u_1, u_2)^{\rm T}, \ V=(v_1, v_2)^{\rm T} \in \mathbb{R}^2,\\[1mm]
&\mathbb{L}^2(\Omega) :={\rm L}^2(\Omega) \times {\rm L}^2(\Omega),\quad (U, V)_{\mathbb{L}^2} := (u_1, v_1)_{{\rm L}^2} + (u_2, v_2)_{{\rm L}^2},\\[1mm]
&\qquad U=(u_1, u_2)^{\rm T},\quad V=(v_1, v_2)^{\rm T} \in \mathbb{L}^2(\Omega),\\[1mm]
&\mathbb{L}^r(\Omega) := {\rm L}^r(\Omega) \times {\rm L}^r(\Omega),\quad |U|_{\mathbb{L}^r}^r := |u_1|_{{\rm L}^r}^r + |u_2|_{{\rm L}^r}^r\quad\ U \in \mathbb{L}^r(\Omega)\ (1\leq r < \infty),\\[1mm]
&\mathbb{H}^1_0(\Omega) := {\rm H}^1_0(\Omega) \times {\rm H}^1_0(\Omega),\
(U, V)_{\mathbb{H}^1_0} := (u_1, v_1)_{{\rm H}^1_0} + (u_2, v_2)_{{\rm H}^1_0}\ \ U, V \in \mathbb{H}^1_0(\Omega),\\
&\mathbb{H}^2(\Omega) := {\rm H}^2(\Omega) \times {\rm H}^2(\Omega).
\end{aligned}
\]
We use the differential symbols to indicate differential operators which act on each component of \({\mathbb{H}^1_0}(\Omega)\)-elements:
\[
\begin{aligned}
& D_i = \frac{\partial}{\partial x_i}: \mathbb{H}^1_0(\Omega) \rightarrow \mathbb{L}^2(\Omega),\\
&D_i U = (D_i u_1, D_i u_2)^{\rm T} \in \mathbb{L}^2(\Omega) \ (i=1, \cdots, N),\\[2mm]
& \nabla = \left(\frac{\partial}{\partial x_1}, \cdots, \frac{\partial}{\partial x_N}\right): \mathbb{H}^1_0(\Omega) \rightarrow ({\rm L}^2(\Omega))^{2 N},\\
&\nabla U=(\nabla u_1, \nabla u_2)^T \in ({\rm L}^2(\Omega))^{2 N}.
\end{aligned}
\]

We further define, for \(U=(u_1, u_2)^{\rm T},\ V= (v_1, v_2)^{\rm T},\ W = (w_1, w_2)^{\rm T}\),
\[
\begin{aligned}
&U(x) \cdot \nabla V(x) := u_1(x) \nabla v_1(x) + u_2(x) \nabla v_2(x) \in \mathbb{R}^N,\\[2mm]
&( U(x) \cdot \nabla V(x) ) W(x) := ( u_1(x) ~\! w_1(x) \nabla v_1(x), u_2(x) w_2(x) \nabla v_2(x) )^{\rm T} \in \mathbb{R}^{2N},\\[2mm]
&(\nabla U(x) \cdot \nabla V(x)) := \nabla u_1(x) \cdot \nabla v_1(x) + \nabla u_2(x) \cdot \nabla v_2(x) \in \mathbb{R}^1,\\[2mm]
&|\nabla U(x)| := \left(|\nabla u_1(x)|^2_{\mathbb{R}^N} + |\nabla u_2(x)|^2_{\mathbb{R}^N} \right)^{1/2}.
\end{aligned}
\]

In addition, \(\mathcal{H}^S\) denotes the space of functions with values in \(\mathbb{L}^2(\Omega)\) defined on \([0, S]\) (\(S > 0\)), which is a Hilbert space with the following inner product and norm.
\[
\begin{aligned}
&\mathcal{H}^S := {\rm L}^2(0, S; \mathbb{L}^2(\Omega)) \ni U(t), V(t),\\
&\quad\mbox{with inner product:}\ (U, V)_{\mathcal{H}^S} = \int_0^S (U, V)_{\mathbb{L}^2}^2 dt,\\
&\quad\mbox{and norm:}\ \|U\|_{\mathcal{H}^S}^2 = (U, U)_{\mathcal{H}^S}.
\end{aligned}
\]

As the realization in \(\mathbb{R}^2\) of the imaginary unit \(i\) in \(\mathbb{C}\), we introduce the following matrix \(I\), which is a linear isometry on \(\mathbb{R}^2\):
\[
I = \begin{pmatrix}
0 & -1\\ 1 & 0
\end{pmatrix}.
\]
We abuse \(I\) for the realization of \(I\) in \(\mathbb{L}^2(\Omega)\), i.e., \(I U = ( - u_2, u_1 )^{\rm T}\) for all \(U = (u_1, u_2)^{\rm T} \in \mathbb{L}^2(\Omega)\).

Then \(I\) satisfies the following properties (see \cite{KOS1}):

\begin{enumerate}
\item Skew-symmetric property:
\begin{equation}
\label{skew-symmetric_property}
(IU \cdot V)_{\mathbb{R}^2} = - (U \cdot IV)_{\mathbb{R}^2}; \hspace{4mm}
(IU \cdot U)_{\mathbb{R}^2} = 0 \hspace{4mm}
\mbox{for each}\ U, V \in \mathbb{R}^2.
\end{equation}

\item Commutative property with the differential operator \(D_i = \frac{\partial}{\partial x_i}\):
\begin{equation}
\label{commutative_property}
I D_i = D_i I:\mathbb{H}^1_0 \rightarrow \mathbb{L}^2\ (i=1, \cdots, N).
\end{equation}

\end{enumerate}

Let \({\rm H}\) be a Hilbert space and denote by \(\Phi({\rm H})\) the set of all lower semi-continuous convex function \(\phi\) from \({\rm H}\) into \((-\infty, +\infty]\) such that the effective domain of \(\phi\) given by \({\rm D}(\phi) := \{u \in {\rm H}\mid \ \phi(u) < +\infty \}\) is not empty.
Then for \(\phi \in \Phi({\rm H})\), the subdifferential of \(\phi\) at \(u \in {\rm D}(\phi)\) is defined by
\[
\partial \phi(u) := \{w \in {\rm H}\mid  (w, v - u)_{\rm H} \leq \phi(v)-\phi(u) \hspace{2mm} \mbox{for all}\ v \in {\rm H}\},
\]
which is a possibly multivalued maximal monotone operator with domain\\
\({\rm D}(\partial \phi) = \{u \in {\rm H}\mid  \partial\phi(u) \neq \emptyset\}\).
However for the discussion below, we have only to consider the case where \(\partial \phi\) is single valued.

We define functionals \(\varphi, \ \psi_r:\mathbb{L}^2(\Omega) \rightarrow [0, +\infty]\) (\(r\geq2\)) by
\begin{align}
\label{varphi}
&\varphi(U) :=
\left\{
\begin{aligned}
&\frac{1}{2} \displaystyle\int_\Omega |\nabla U(x)|^2 dx
&&\mbox{if}\ U \in \mathbb{H}^1_0(\Omega),\\[3mm]
&+ \infty &&\mbox{if}\ U \in \mathbb{L}^2(\Omega)\setminus\mathbb{H}^1_0(\Omega),
\end{aligned}
\right.
\\[2mm]
\label{psi}
&\psi_r(U) :=
\left\{
\begin{aligned}
&\frac{1}{r} \displaystyle\int_\Omega |U(x)|_{\mathbb{R}^2}^r dx
&&\mbox{if}\ U \in \mathbb{L}^r(\Omega) \cap \mathbb{L}^2(\Omega),\\[3mm]
&+\infty &&\mbox{if}\ U \in \mathbb{L}^2(\Omega)\setminus\mathbb{L}^r(\Omega).
\end{aligned}
\right.
\end{align}
Then it is easy to see that \(\varphi, \psi_r \in \Phi(\mathbb{L}^2(\Omega))\) and their subdifferentials are given by
\begin{align}
\label{delvaphi}
&\begin{aligned}[t]
&\partial \varphi(U)=-\Delta U\ \mbox{with} \ {\rm D}( \partial \varphi) = \mathbb{H}^1_0(\Omega)\cap\mathbb{H}^2(\Omega),\\[2mm]
\end{aligned}\\
\label{delpsi}
&\partial \psi_r(U) = |U|_{\mathbb{R}^2}^{r-2}U=|U|^{r-2}U\ {\rm with} \ {\rm D}( \partial \psi_r) = \mathbb{L}^{2(r-1)}(\Omega) \cap \mathbb{L}^2(\Omega).
\end{align}
Furthermore for any $\mu>0$, we can define the Yosida approximations
 \(\partial \varphi_\mu,\ \partial \psi_{r,\mu}\) of \(\partial \varphi,\ \partial \psi_r\) by
\begin{align}
\label{Yosida:varphi}
&\partial \varphi_\mu(U) := \frac{1}{\mu}(U - J_\mu^{\partial \varphi}U) 
= \partial \varphi(J_\mu^{\partial \varphi} U), 
\quad J_\mu^{\partial \varphi} : = ( 1 + \mu \partial \varphi)^{-1},
\\[2mm]
\label{Yosida:psi}
&\partial \psi_{r,\mu}(U) := \frac{1}{\mu} (U - J_\mu^{\partial \psi_r} U) 
= \partial \psi_r( J_\mu^{\partial \psi_r} U ), 
\quad  J_\mu^{\partial \psi_r} : = ( 1 + \mu \partial \psi_r)^{-1}.
\end{align}
The second identity holds since \(\partial\varphi\) and \(\partial\psi_r\) are single-valued.

Then it is well known that \(\partial \varphi_\mu, \ \partial \psi_{r,\mu}\) are Lipschitz continuous on \(\mathbb{L}^2(\Omega)\) and satisfies the following properties (see \cite{B1}, \cite{B2}):
\begin{align}
\label{asd}
\psi_r(J_\mu^{\partial\psi_r}U)&\leq\psi_{r,\mu}(U) \leq \psi_r(U),\\
\label{as}
|\partial\psi_{r,\mu}(U)|_{{\rm L}^2}&=|\partial\psi_{r}(J_\mu^{\partial\psi}U)|_{{\rm L}^2}\leq|\partial\psi_r(U)|_{\mathbb{L}^2}\quad\forall\ U \in {\rm D}(\partial\psi_r),\ \forall \mu > 0,
\end{align}
where \(\psi_{r,\mu}\) is the Moreau-Yosida regularization of \(\psi_r\) given by the following formula:
\[
\psi_{r,\mu}(U) = \inf_{V \in \mathbb{L}^2(\Omega)}\left\{
\frac{1}{2\mu}|U-V|_{\mathbb{L}^2}^2+\psi_r(V)
\right\}
=\frac{\mu}{2}|(\partial\psi_r)_\mu(U)|_{\mathbb{L}^2}^2+\psi_r(J_\mu^{\partial\psi}U)\geq0.
\]
Moreover since \(\psi_r(0)=0\), it follows from the definition of subdifferential operators and \eqref{asd} that
\begin{align}\label{asdf}
(\partial\psi_{r,\mu}(U),U)_{\mathbb{L}^2} = (\partial\psi_{r,\mu}(U),U-0)_{\mathbb{L}^2}
\leq \psi_{r,\mu}(U) - \psi_{r,\mu}(0) \leq \psi_r(U).
\end{align}

Here for later use, we prepare some fundamental properties of \(I\) in connection with \(\partial \varphi,\ \partial \psi_r,\  \partial \varphi_\mu,\ \partial \psi_{r,\mu}\).
\begin{Lem2}[(c.f. \cite{KOS1} Lemma 2.1)]
The following angle conditions hold:
\label{Lem:2.1}
\begin{align}
\label{orth:IU}
&(\partial \varphi(U), I U)_{\mathbb{L}^2} = 0\quad 
\forall U \in {\rm D}(\partial \varphi),\quad 
(\partial \psi_r(U), I U)_{\mathbb{L}^2} = 0\quad \forall U \in {\rm D}(\partial \psi_r), 
\\[2mm]
\label{orth:mu:IU}
&(\partial \varphi_\mu(U), I U)_{\mathbb{L}^2} = 0,\quad 
(\partial \psi_{r,\mu}(U), I U)_{\mathbb{L}^2} = 0 \quad 
\forall U \in \mathbb{L}^2(\Omega), 
\\[2mm]
\label{orth:Ipsi}
&\begin{aligned}
&(\partial \psi_q(U), I \partial \psi_r(U))_{\mathbb{L}^2} = 0,\\
&(\partial \psi_q(U), I \partial \psi_{r,\mu}(U))_{\mathbb{L}^2}=0\quad 
\forall U \in {\rm D}(\partial \psi_q)\cap{\rm D}(\partial \psi_r), \forall q,r \geq 2,
\end{aligned}\\
\label{angle}
&(\partial\varphi(U),\partial\psi_r(U))_{\mathbb{L}^2} \geq 0\quad\forall U \in{\rm D}(\partial \varphi)\cap{\rm D}(\partial \psi_r).
\end{align}
\end{Lem2}

\begin{proof}
The first relation in \eqref{orth:Ipsi} is obvious.
So we only give a proof of the second relation in \eqref{orth:Ipsi} here.
Let \(W=J_\mu^{\partial\psi_r}U\), then \(U = W + \mu\partial\psi_r(W)\).
It holds
\[
(\partial \psi_q(U), I \partial \psi_{r,\mu}(U))_{\mathbb{L}^2}
=
(|U|^{q-2}(W + \mu|W|^{r-2}W),I|W|^{r-2}W)_{\mathbb{L}^2}=0.
\]
\end{proof}

We also recall a property of the sum of \(\partial\varphi\) and \(\partial\psi_r\).
\begin{Lem2}[(c.f. \cite{KOS1} Lemma 2.3)]
The operator \(\lambda\partial\varphi(U) + \varepsilon\partial\psi_r(U)\) (\(\varepsilon>0\)) is maximal monotone in \(\mathbb{L}^2(\Omega)\).
\label{Lem:2.3}
Moreover the following relation holds:
\[
\lambda\partial\varphi(U) + \varepsilon\partial\psi_r(U) = \partial(\lambda\varphi+\varepsilon\psi_r)(U).
\]
\end{Lem2}

Then in view of \eqref{delvaphi}, \eqref{delpsi} and the property of \(I\), we can see that (CGL) can be reduced to the following evolution equation:
\[
\tag*{(ACGL)}
\left\{
\begin{aligned}
&\frac{dU}{dt}(t) \!+\! \lambda\partial\varphi(U) \!+\! \alpha I \partial \varphi(U) \!-\! (\kappa+ \beta I) \partial \psi_q(U) \!-\! \gamma U \!=\! F(t),\quad t \in (0,T),\\
&U(0) =U_0,
\end{aligned}
\right.
\]
where \(f(t, x) = f_1(t, x) + i f_2(t, x)\) is identified with \(F(t) = (f_1(t, \cdot), f_2(t, \cdot))^{\rm T} \in \mathbb{L}^2(\Omega)\).

We conclude this section by preparing two lemmas for later use.
The first one is a pointwise estimate for the difference of nonlinear terms:
\begin{Lem}
Let $r \in (2,\infty)$ and put 
\[
d_r
:=
\begin{cases}
\frac{r-1}{2}&\mbox{if}\ 4 \leq r,\\
\frac{3}{2}&\mbox{if}\ 3 < r < 4,\\
1&\mbox{if}\ 2 < r \leq 3.
\end{cases}
\]
Then the following inequality holds.\label{locLip1}
\begin{equation}\label{locLip2}
\begin{aligned}[t]
\left|\left(|U|^{r - 2} u_i - |V|^{r - 2} v_i\right)(x_j - y_j)\right| 
\leq d_r\left(|U|^{r - 2} + |V|^{r - 2}\right) |U-V||X - Y|&\\
\forall i,j=1,2&
\end{aligned}
\end{equation}
for all \(U=(u_1,u_2),V=(v_1,v_2),X=(x_1,x_2),Y=(y_1,y_2)\in \mathbb{R}^2\).
\end{Lem}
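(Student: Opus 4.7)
The plan is to first reduce to the scalar inequality
\[
\bigl| |U|^{r-2} u_i - |V|^{r-2} v_i \bigr| \leq d_r \bigl( |U|^{r-2} + |V|^{r-2} \bigr) |U - V|,
\]
which implies \eqref{locLip2} at once since $|x_j - y_j| \leq |X - Y|$ for $j = 1, 2$. To prove this scalar estimate, I would parameterize the segment joining $V$ to $U$ by $W(t) = V + t(U - V)$ for $t \in [0,1]$ and write
\[
|U|^{r-2} u_i - |V|^{r-2} v_i = \int_0^1 \frac{d}{dt}\bigl[ |W(t)|^{r-2} w_i(t) \bigr] dt.
\]
Direct computation of the derivative, combined with Cauchy--Schwarz and $|w_i| \leq |W|$, yields the pointwise bound
\[
\Bigl| \tfrac{d}{dt}\bigl( |W(t)|^{r-2} w_i(t) \bigr) \Bigr| \leq (r-1)\, |W(t)|^{r-2}\, |U - V|,
\]
so the problem reduces to estimating the scalar integral $\int_0^1 |W(t)|^{r-2}\, dt$.

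Using the triangle inequality $|W(t)| \leq (1-t)|V| + t|U|$, the argument then splits by the range of $r$. When $r \geq 3$ the map $s \mapsto s^{r-2}$ is convex, so Jensen's inequality gives
\[
\int_0^1 \bigl((1-t)|V| + t|U|\bigr)^{r-2} dt \leq \tfrac{1}{2}\bigl(|U|^{r-2} + |V|^{r-2}\bigr),
\]
which multiplied by $r-1$ yields the constant $(r-1)/2$ (sharp for $r \geq 4$ and uniformly bounded by $3/2$ on $3 < r < 4$). When $2 < r \leq 3$, the exponent $r - 2 \in (0, 1]$ makes $s \mapsto s^{r-2}$ subadditive, whence $((1-t)|V| + t|U|)^{r-2} \leq (1-t)^{r-2}|V|^{r-2} + t^{r-2}|U|^{r-2}$, and integration produces $\tfrac{1}{r-1}(|U|^{r-2} + |V|^{r-2})$, exactly cancelling the prefactor $r-1$ and giving $d_r = 1$.

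The main difficulty I expect is cosmetic rather than conceptual: matching the three piecewise constants to the two integration regimes and handling the apparent singularity of the factor $|W|^{r-4}$ at $W = 0$ when $r < 4$. The latter is harmless because $|W|^{r-4}(W \cdot (U-V))w_i$ can be rewritten as $|W|^{r-2}\bigl((W \cdot (U-V))w_i / |W|^2\bigr)$ on $\{W \neq 0\}$, where the bracketed factor has absolute value at most $|U-V|$; the exceptional set $\{W(t) = 0\}$ has measure zero in $t$ unless $U = V = 0$, in which case both sides of the asserted inequality vanish.
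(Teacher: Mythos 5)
Your proof is correct. Note that the paper itself gives no argument for this lemma beyond citing Lemma 5 of \cite{KO2} ``with obvious modifications,'' and the standard proof there is precisely your segment-integration argument: write the difference as $\int_0^1 \frac{d}{dt}\bigl[|W(t)|^{r-2}w_i(t)\bigr]\,dt$ with $W(t)=V+t(U-V)$, bound the integrand by $(r-1)|W(t)|^{r-2}|U-V|$, and then estimate $\int_0^1 |W(t)|^{r-2}\,dt$ by convexity of $s\mapsto s^{r-2}$ when $r\geq 3$ and by subadditivity when $2<r\leq 3$. Your handling of the apparent singularity of $|W|^{r-4}$ is also fine (the map $W\mapsto |W|^{r-2}w_i$ is in fact $C^1$ on all of $\mathbb{R}^2$ for $r>2$, since its gradient is $O(|W|^{r-2})$). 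The only remark worth making is that in the range $3<r<4$ your convexity bound yields the constant $\frac{r-1}{2}<\frac{3}{2}$, i.e.\ something slightly sharper than the stated $d_r=\frac{3}{2}$, so the lemma follows a fortiori; the paper's three-case constant is simply a convenient (not optimal) packaging.
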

This can be proved by the same arguments in the proof of Lemma 5 in \cite{KO2} with obvious modifications (see (6.2) (6.3) and (6.4) in \cite{KO2}).

Next lemma is concerned with the accretivity of the operator \(\partial\psi_q\) in \(\mathbb{L}^r(\Omega)\), namely the following assertion holds:
\begin{Lem}
Let \(V_i = J_\mu^{\partial\psi_q}U_i\) (\(i=1,2\)).\label{ACC}
Then the following inequality holds:
\begin{equation}\label{ACC1}
|V_1-V_2|_{\mathbb{L}^r} \leq |U_1-U_2|_{\mathbb{L}^r}.
\end{equation}
\end{Lem}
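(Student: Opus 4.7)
\textbf{Proof plan for Lemma \ref{ACC}.} The definition of the resolvent gives
\[
U_i = V_i + \mu\,|V_i|^{q-2} V_i \quad (i = 1, 2),
\]
so subtracting yields the pointwise identity
\[
U_1 - U_2 = (V_1 - V_2) + \mu\bigl(|V_1|^{q-2} V_1 - |V_2|^{q-2} V_2\bigr).
\]
I would take the dot product in $\mathbb{R}^2$ with the duality pairing $|V_1 - V_2|^{r-2}(V_1 - V_2)$. This produces
\[
\bigl(U_1 - U_2\bigr)\cdot |V_1 - V_2|^{r-2}(V_1 - V_2)
= |V_1 - V_2|^{r} + \mu\,|V_1 - V_2|^{r-2} \bigl(|V_1|^{q-2} V_1 - |V_2|^{q-2} V_2\bigr)\cdot(V_1 - V_2).
\]
Since $\xi \mapsto |\xi|^{q-2}\xi$ is the gradient of the convex map $\xi \mapsto |\xi|^{q}/q$ on $\mathbb{R}^2$, it is monotone, so the bracketed inner product on the right is nonnegative. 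Thus the key pointwise bound
\[
\bigl(U_1 - U_2\bigr)\cdot |V_1 - V_2|^{r-2}(V_1 - V_2) \;\geq\; |V_1 - V_2|^{r}
\]
holds almost everywhere in $\Omega$.

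The remaining task is to integrate and apply Hölder's inequality, which would formally give
\[
|V_1 - V_2|_{\mathbb{L}^r}^{r}
\leq |U_1 - U_2|_{\mathbb{L}^r}\,|V_1 - V_2|_{\mathbb{L}^r}^{r-1}.
\]
The main obstacle is that a priori $V_1 - V_2$ lies only in $\mathbb{L}^2(\Omega)$ (since $J_\mu^{\partial\psi_q}$ is nonexpansive in $\mathbb{L}^2$), not in $\mathbb{L}^r$, so the last step is not immediately legal when $\Omega$ is unbounded or when $r > 2$. I would remedy this with a truncation argument: set
\[
A_M := \bigl\{x \in \Omega \;\big|\; |V_1(x) - V_2(x)| \leq M\bigr\}, \qquad M > 0,
\]
and integrate the pointwise inequality only over $A_M$. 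Since $|V_1 - V_2|^{r} \leq M^{r-2}|V_1 - V_2|^{2}$ on $A_M$, the integral $\int_{A_M}|V_1-V_2|^{r}\,dx$ is finite, and Hölder's inequality yields
\[
\int_{A_M}|V_1 - V_2|^{r}\,dx
\leq |U_1 - U_2|_{\mathbb{L}^r}
\left(\int_{A_M}|V_1 - V_2|^{r}\,dx\right)^{\!(r-1)/r},
\]
hence $\bigl(\int_{A_M}|V_1 - V_2|^{r}\,dx\bigr)^{1/r} \leq |U_1 - U_2|_{\mathbb{L}^r}$. Letting $M \to \infty$ and invoking the monotone convergence theorem delivers the desired bound \eqref{ACC1} and, as a bonus, the $\mathbb{L}^r$-integrability of $V_1 - V_2$ whenever the right-hand side is finite.
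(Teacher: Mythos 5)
Your proposal is correct and follows essentially the same route as the paper: test the resolvent identity against \(|V_1-V_2|^{r-2}(V_1-V_2)\), discard the nonnegative monotonicity term, and apply H\"older's inequality (the paper verifies the pointwise monotonicity of \(\xi\mapsto|\xi|^{q-2}\xi\) by an explicit Young's-inequality computation rather than by citing convexity of \(\xi\mapsto|\xi|^q/q\), but that is the same fact). Your truncation over the sets \(A_M\) is a welcome extra precaution that the paper leaves implicit, since it legitimizes dividing by \(|V_1-V_2|_{\mathbb{L}^r}^{r-1}\) before one knows that quantity is finite.
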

\begin{proof}
By the definition of resolvent operators, we have \(U_i = V_i + \mu\partial\psi_q(V_i) = V_i + \mu|V_i|^{q-2}V_i\) (\(i=1,2\)).
Multiplying \(U_1-U_2\) by \(|V_1-V_2|^{r-2}(V_1-V_2)\) and applying H\"older's inequality, we get
\[
\begin{aligned}
&|U_1-U_2|_{\mathbb{L}^r}|V_1-V_2|_{\mathbb{L}^r}^{r-1}\\
&\geq
(U_1-U_2, |V_1-V_2|^{r-2}(V_1-V_2))_{\mathbb{L}^2}\\
&=
|V_1-V_2|_{\mathbb{L}^r}^r
+\mu
(|V_1|^{q-2}V_1-|V_2|^{q-2}V_2, |V_1-V_2|^{r-2}(V_1-V_2))_{\mathbb{L}^2}.
\end{aligned}
\]
Here, to derive \eqref{ACC1}, it suffices to show that
\[
\begin{aligned}
&(|V_1|^{q-2}V_1-|V_2|^{q-2}V_2, |V_1-V_2|^{r-2}(V_1-V_2))_{\mathbb{L}^2}\\
&=
\int_\Omega
|V_1-V_2|^{r-2}
\{|V_1|^q+|V_2|^q-(|V_1|^{q-2}+|V_2|^{q-2})V_1V_2\}
dx\geq0.
\end{aligned}
\]
In fact, by Young's inequality, we get
\[
|V_1|^{q-2}V_1V_2
\leq \left(1-\frac{1}{q}\right)|V_1|^q+\frac{1}{q}|V_2|^q,
\quad
|V_2|^{q-2}V_1V_2
\leq \left(1-\frac{1}{q}\right)|V_2|^q+\frac{1}{q}|V_1|^q.
\]
\end{proof}


\section{Main Results}\label{sec-3}
Our main results are stated as follows.

\setcounter{tmpcnt}{1}
\begin{Thm}[Local well-posedness in general domains]
Let \(\Omega \subset \mathbb{R}^N\) be a general domain of uniformly \({\rm C}^2\)-regular class \label{lwpgd}
, \(F \in \mathcal{H}^T\) and \(2 < q < 2^*\) (subcritical), where
\begin{equation}\label{SobSub}
2^* =
\begin{cases}
+\infty & (N = 1, 2),\\
\frac{2N}{N - 2} & (N \geq 3).
\end{cases}
\end{equation}
Then for all \(U_0 \in \mathbb{H}_0^1(\Omega) = {\rm D}(\varphi)\), there exist \(T_0 \in (0, T]\) and a unique function \(U(t) \in {\rm C}([0, T_0]; \mathbb{H}^1_0(\Omega))\) satisfying:
\begin{enumerate}
\renewcommand{\labelenumi}{(\roman{enumi})}
\item \(U \in {\rm W}^{1, 2}(0, T_0; \mathbb{L}^2(\Omega))\),
\item \(U(t) \in {\rm D}(\partial\varphi) \subset {\rm D}(\partial\psi_q)\) for a.e. \(t \in [0, T_0]\) and satisfies (ACGL) for a.e. \(t \in [0, T_0]\),
\item \(\partial\varphi(U(\cdot)), \partial\psi_q(U(\cdot)) \in \mathcal{H}^{T_0}\).
\end{enumerate}
\end{Thm}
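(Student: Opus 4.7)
The strategy is to construct the solution as a limit, as \(\varepsilon\to0^+\), of global solutions \(U_\varepsilon\) of the auxiliary equation obtained by adding the dissipative perturbation \(\varepsilon\partial\psi_r(U)\) to (ACGL) with a suitable \(r>q\); the global well-posedness of this auxiliary problem is proved in \S4 via Yosida approximation, which replaces the compactness argument from the bounded-domain paper \cite{KO2}.

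The central step is to derive \(\mathbb{H}^1_0\)-level a priori estimates on \(U_\varepsilon\) that are uniform in \(\varepsilon\) on a common time interval \([0,T_0]\). Taking the inner product in \(\mathbb{L}^2(\Omega)\) of the auxiliary equation with \(\partial\varphi(U_\varepsilon)\), Lemma~\ref{Lem:2.1} makes the term \((I\partial\varphi(U_\varepsilon),\partial\varphi(U_\varepsilon))_{\mathbb{L}^2}\) vanish, \eqref{angle} makes the dissipative contribution \((\partial\psi_r(U_\varepsilon),\partial\varphi(U_\varepsilon))_{\mathbb{L}^2}\) nonnegative, and the time-derivative pairs into \(\frac{d}{dt}\varphi(U_\varepsilon)\). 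The non-monotone term \((-(\kappa+\beta I)\partial\psi_q(U_\varepsilon),\partial\varphi(U_\varepsilon))_{\mathbb{L}^2}\) is bounded by Cauchy--Schwarz together with a Gagliardo--Nirenberg inequality which, thanks to the subcriticality \(q<2^*\), makes the top-order term absorbable by Young's inequality into \(\lambda|\partial\varphi(U_\varepsilon)|_{\mathbb{L}^2}^2\). The outcome is a differential inequality of the form
\[
\frac{d}{dt}\varphi(U_\varepsilon) + \frac{\lambda}{2}\,|\partial\varphi(U_\varepsilon)|_{\mathbb{L}^2}^2 \le C\bigl(1+\varphi(U_\varepsilon)\bigr)^p + C\,|F(t)|_{\mathbb{L}^2}^2,
\]
with constants \(p>1\) and \(C>0\) independent of \(\varepsilon\). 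Comparison with the scalar ODE \(y'=C(1+y)^p\) yields a time \(T_0>0\), depending only on \(|U_0|_{\mathbb{H}^1_0}\) and \(\|F\|_{\mathcal{H}^T}\), on which \(\varphi(U_\varepsilon)\) remains bounded and \(\partial\varphi(U_\varepsilon)\in\mathcal{H}^{T_0}\) uniformly; companion estimates give uniform bounds on \(\partial_t U_\varepsilon\), \(\partial\psi_q(U_\varepsilon)\) in \(\mathcal{H}^{T_0}\), together with \(\varepsilon\partial\psi_r(U_\varepsilon)\to0\) in \(\mathcal{H}^{T_0}\).

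One then extracts weakly convergent subsequences and passes to the limit in (ACGL). The nonlinear terms require at least a.e.\ convergence of \(U_\varepsilon\), which cannot come from a global Aubin--Lions argument since \(\mathbb{H}^1_0(\Omega)\hookrightarrow\mathbb{L}^2(\Omega)\) is not compact on unbounded \(\Omega\); instead, the uniform \(\mathbb{H}^2\)-bound coming from \(\partial\varphi(U_\varepsilon)\in\mathcal{H}^{T_0}\) yields local-in-space Aubin--Lions compactness, which via a diagonal extraction gives a.e.\ convergence, and the demi-closedness of the maximal monotone operators \(\partial\varphi\) and \(\partial\psi_q\) identifies their weak limits as \(\partial\varphi(U)\) and \(\partial\psi_q(U)\). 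Uniqueness on \([0,T_0]\) follows by subtracting two solutions \(U^{(1)},U^{(2)}\) and taking the \(\mathbb{L}^2\)-inner product with \(U^{(1)}-U^{(2)}\): the \(\alpha I\partial\varphi\)-cross-term vanishes by the commutativity \(I\partial\varphi=\partial\varphi I\) together with the skew-symmetry of \(I\) applied to the difference, while Lemma~\ref{locLip1} combined with H\"older's and Sobolev's inequalities handles the nonlinear differences and produces a Gronwall inequality.

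The main obstacle is the loss of Rellich--Kondrachov compactness on general (possibly unbounded) \(\Omega\), which forces the convergence arguments to rely on local-in-space compactness plus demi-closedness of monotone operators rather than a direct global compactness step. A secondary delicate point is the Gagliardo--Nirenberg estimate for \(|\partial\psi_q(U_\varepsilon)|_{\mathbb{L}^2}=|U_\varepsilon|_{\mathbb{L}^{2(q-1)}}^{q-1}\) used in deriving the differential inequality, which must be set up so as to cover the full subcritical range \(q<2^*\) -- including those \(q\) with \(2(q-1)>2^*\), where \(\mathbb{L}^{2(q-1)}\) is not directly embedded by \(\mathbb{H}^1\) alone but requires the additional \(\mathbb{H}^2\)-regularity carried by the bound on \(\partial\varphi(U_\varepsilon)\).
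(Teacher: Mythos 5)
Your proposal is correct in outline and shares the paper's overall architecture: approximate (ACGL) by the auxiliary equation with the dissipative term \(\varepsilon\partial\psi_r\) (\(q<r<2^*\)), derive \(\varepsilon\)-independent a priori estimates on a common interval \([0,T_0]\) by testing with \(U\) and \(\partial\varphi(U)\), using \eqref{orth:IU}, \eqref{angle}, the Gagliardo--Nirenberg inequalities \eqref{GNH1} and \eqref{LKJH} (the latter being exactly the device you flag for the range \(2(q-1)>2^*\)), and a superlinear Gronwall-type comparison (the paper invokes \^Otani's lemma with \(y=|U|_{\mathbb{L}^2}^2+2\varphi(U)\); note that the \(\mathbb{L}^2\)-norm must be carried along in the functional, since the right-hand side of your differential inequality for \(\varphi(U_\varepsilon)\) alone still contains \(|U_\varepsilon|_{\mathbb{L}^2}^{2(q-1)}\)). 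Where you genuinely diverge is the passage to the limit \(\varepsilon\to0\): you propose local-in-space Aubin--Lions compactness, diagonal extraction, a.e.\ convergence, and demi-closedness, whereas the paper avoids compactness altogether by showing that \(\{U^\varepsilon\}_{\varepsilon>0}\) is a Cauchy net in \({\rm C}([0,T_0];\mathbb{L}^2(\Omega))\) -- subtracting two auxiliary equations, controlling the non-monotone difference via the pointwise estimate \eqref{locLip2} together with \eqref{GNH1}, and handling the mismatched dissipative terms through \((\varepsilon\partial\psi_r(U^\varepsilon)-\varepsilon'\partial\psi_r(U^{\varepsilon'}),U^\varepsilon-U^{\varepsilon'})_{\mathbb{L}^2}\geq(\varepsilon-\varepsilon')(\psi_r(U^\varepsilon)-\psi_r(U^{\varepsilon'}))\), which is bounded by the a priori estimate \eqref{aaa}. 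Your route is workable (a.e.\ convergence plus the uniform \(\mathcal{H}^{T_0}\)-bound on \(\partial\psi_q(U_\varepsilon)\) does identify the weak limit of the Nemytskii term, and \(\partial\varphi\) is linear), but it requires the extra diagonal argument and a separate uniqueness proof for (ACGL); the paper's Cauchy-net argument buys strong convergence globally in space, convergence of the whole family rather than a subsequence, and uniqueness essentially for free from the same computation \eqref{DUeUep}. Both approaches are legitimate, and your identification of the two delicate points (loss of Rellich--Kondrachov and the estimate of \(|U|_{\mathbb{L}^{2(q-1)}}\) via \(\mathbb{H}^2\)-regularity) matches the paper's own emphasis.
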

Furthermore the following alternative on the maximal existence time of the solution holds:
\begin{Thm}[Alternative]
Let \(T_m\) be the maximal existence time of a solution to (ACGL) satisfying the regularity (i)-(iii) given in Theorem \ref{lwpgd} for all $T_0 \in (0,T_m)$.\label{altgd}
Then the following alternative on \(T_m\) holds:
\begin{itemize}
\item \(T_m = T\) or
\item \(T_m < T\) and \( \lim_{t \uparrow T_m}\left\{|U(t)|_{\mathbb{L}^2}^2+2\varphi(U(t))\right\} = +\infty\).
\end{itemize}
\end{Thm}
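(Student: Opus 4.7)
The plan is to argue by contrapositive: assuming $T_m < T$ and that the displayed limit fails to diverge to $+\infty$, I will construct an extension of $U$ strictly past $T_m$, contradicting maximality. Concretely, if $\liminf_{t\uparrow T_m}\{|U(t)|_{\mathbb{L}^2}^2 + 2\varphi(U(t))\} < +\infty$, then there exist $M>0$ and a sequence $t_n \uparrow T_m$ with $|U(t_n)|_{\mathbb{L}^2}^2 + 2\varphi(U(t_n)) \leq M$ for every $n$, so that $\{U(t_n)\}$ is bounded in $\mathbb{H}^1_0(\Omega)$.

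The core step is to re-invoke the local existence theorem (Theorem \ref{lwpgd}) with $U(t_n)$ as the new initial datum at time $t_n$ and with the shifted forcing $F(\cdot + t_n) \in \mathcal{H}^{T-t_n}$, whose norm is bounded by $|F|_{\mathcal{H}^T}$. Tracing the construction in Section \ref{sec-3} onward (to be developed in the proof of Theorem \ref{lwpgd}), one sees that the local existence time depends only on an upper bound for $\varphi(U_0) + |U_0|_{\mathbb{L}^2}^2$, on $|F|_{\mathcal{H}^T}$, and on the structural constants of the equation. Hence there exists $T_* > 0$, independent of $n$, such that for every $n$ we obtain a solution $\tilde{U}_n \in {\rm C}([t_n, t_n + T_*]; \mathbb{H}^1_0(\Omega))$ satisfying (i)--(iii) of Theorem \ref{lwpgd} on $[t_n, t_n + T_*]$ (after shrinking $T_*$ if necessary so that $t_n + T_* \leq T$).

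Now fix $n_0$ so large that $T_m - t_{n_0} < T_*/2$. By the uniqueness part of Theorem \ref{lwpgd}, $\tilde{U}_{n_0}$ coincides with $U$ on $[t_{n_0}, T_m)$. Gluing $U|_{[0, t_{n_0}]}$ with $\tilde{U}_{n_0}|_{[t_{n_0}, t_{n_0} + T_*]}$ produces an extended function $\hat{U}$ on $[0, t_{n_0} + T_*]$ that satisfies the regularity (i)--(iii); the ${\rm W}^{1,2}$-in-time regularity is preserved because both pieces are absolutely continuous $\mathbb{L}^2(\Omega)$-valued maps that agree at the junction $t_{n_0}$. Since $t_{n_0} + T_* > T_m$, this contradicts the definition of $T_m$ as the maximal existence time, so the assumed finite $\liminf$ is impossible and the limit must equal $+\infty$.

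The main obstacle is the uniform lower bound $T_*$ on the local existence time: one has to go back into the proof of Theorem \ref{lwpgd} and verify that the local existence time can be taken as a monotone decreasing function of the $\mathbb{H}^1_0(\Omega)$-norm of the initial datum alone (together with $|F|_{\mathcal{H}^T}$ and the fixed parameters $\lambda,\kappa,\alpha,\beta,\gamma,q$), so that an $n$-independent choice $T_* > 0$ is available. Once this is established, the gluing and uniqueness steps are routine and the dichotomy follows.
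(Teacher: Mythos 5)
Your proposal is correct and follows essentially the same route as the paper: the paper also argues by contradiction, takes a sequence $t_n \uparrow T_m$ along which $|U(t_n)|_{\mathbb{L}^2}^2 + 2\varphi(U(t_n)) \leq K_0$, and restarts the local existence argument at $t_n$, using the explicit formula \eqref{Szero} (via \^Otani's lemma) to get a continuation time $T_0 = S\bigl(K_0 + 2(1+\tfrac{1}{\lambda})\|F\|_{\mathcal{H}^T}^2, 2l(\cdot)\bigr)$ independent of $n$. The uniform lower bound on the local existence time that you flag as the main obstacle is exactly what \eqref{Szero} supplies, so your argument closes in the same way as the paper's.
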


In order to formulate the existence of small global solutions for \(F \in \mathcal{H}^T\), let \(\tilde{F}\) be the extension of \(F\) by zero to \((0, +\infty)\).
We set the following notation in order to measure the external force \(F\) in terms of \(\tilde{F}\):
\[
\opnorm{F}^2_2 :=
\sup\left\{  \int_s^{s + 1}|\tilde{F}(t)|_{\mathbb{L}^2}^2 ~\! dt
\mathrel{;} 0 \leq s < +\infty \right\}.
\]
\begin{Thm}[Existence of small global solutions]
Let all assumptions in Theorem \ref{lwpgd} be satisfied and let \(\gamma < 0\).\label{gegd}
Then there exists a sufficiently small number \(r\) independent of \(T\) such that for all \(U_0 \in D(\varphi)\) and \(F \in {\rm L}^2(0, T; \mathbb{L}^2(\Omega))\) with \(\varphi(U_0)+\frac{1}{2}|U_0|_{\mathbb{L}^2}^2 \leq r^2\) and \(\opnorm{F}_2 \leq r^2\), every local solution given in Theorem \ref{lwpgd} can be continued globally up to \([0, T]\).
\end{Thm}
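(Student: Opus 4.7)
The plan is to reduce the problem via the alternative of Theorem \ref{altgd} to obtaining a uniform-in-time a priori bound on the energy
\[
\mathcal{E}(t) := \varphi(U(t)) + \tfrac12|U(t)|_{\mathbb{L}^2}^2,
\]
and to derive this bound from a differential inequality combined with a bootstrap argument that exploits the smallness of $r$. Since Theorem \ref{altgd} states that either $T_m=T$ or $\mathcal E(t)\to +\infty$ as $t\uparrow T_m$, it suffices to show that $\mathcal E$ stays bounded (in fact, small) on $[0,T_m)$.

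First, I would test (ACGL) against $U+\partial\varphi(U)$ in $\mathbb{L}^2(\Omega)$. The angle conditions \eqref{skew-symmetric_property} and \eqref{orth:IU} from Lemma \ref{Lem:2.1} annihilate the skew contributions $(\alpha I\partial\varphi(U),U)$, $(\beta I\partial\psi_q(U),U)$ and $(\alpha I\partial\varphi(U),\partial\varphi(U))$, leaving an identity of the form
\begin{align*}
\tfrac{d\mathcal E}{dt} &+ 2\lambda\varphi(U) + \lambda|\partial\varphi(U)|_{\mathbb{L}^2}^2 + 2|\gamma|\mathcal E \\
&= \kappa q\psi_q(U) + \kappa(\partial\psi_q(U),\partial\varphi(U))_{\mathbb{L}^2} + \beta(I\partial\psi_q(U),\partial\varphi(U))_{\mathbb{L}^2} + (F,U+\partial\varphi(U))_{\mathbb{L}^2}.
\end{align*}
The hypothesis $\gamma<0$ is essential here: since Poincar\'e's inequality is generally unavailable on an unbounded domain, only the $2|\gamma|\mathcal E$ term supplies dissipation for the $\mathbb{L}^2$-part of $\mathcal E$.

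Second, I would bound the right-hand side. Sobolev's subcritical embedding $\mathbb{H}_0^1\hookrightarrow\mathbb{L}^q$ gives $\psi_q(U)\le C\mathcal E^{q/2}$, which is superlinear in $\mathcal E$ since $q>2$. Cauchy--Schwarz followed by Young's inequality handles the two cross terms, absorbing a fraction of $|\partial\varphi(U)|_{\mathbb{L}^2}^2$ into the left side and leaving a residue proportional to $|\partial\psi_q(U)|_{\mathbb{L}^2}^2=|U|_{\mathbb{L}^{2(q-1)}}^{2(q-1)}$. For the latter, I would apply the Gagliardo--Nirenberg inequality interpolating between $\mathbb{L}^2$ and $\mathbb{H}^2$, using the elliptic estimate $|U|_{\mathbb{H}^2}\le C|\partial\varphi(U)|_{\mathbb{L}^2}$ granted by the uniform $C^2$-regularity of $\partial\Omega$; subcriticality $q<2^*$ ensures a strictly positive exponent on the low-norm factor $|U|_{\mathbb{L}^2}$. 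The forcing splits via Young into a multiple of $|\gamma|\mathcal E$, a higher-order absorption into $|\partial\varphi(U)|^2$, and $C|F|_{\mathbb{L}^2}^2$. Combining these, I expect to arrive at
\begin{equation*}
\tfrac{d\mathcal E}{dt} + c_1\mathcal E + c_2|\partial\varphi(U)|_{\mathbb{L}^2}^2 \le C|F(t)|_{\mathbb{L}^2}^2 + C\,\mathcal E^{1+\sigma}
\end{equation*}
for some constants $c_1,c_2>0$ and some $\sigma>0$.

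Third, I would close the argument by a continuity bootstrap. Set $R^2:=2r^2$ and $T_*:=\sup\{t\in[0,T_m):\mathcal E(s)\le R^2\ \forall\, s\le t\}$. For $r$ chosen so small that $CR^{2\sigma}\le c_1/2$, the inequality above reduces on $[0,T_*]$ to $\tfrac{d\mathcal E}{dt}+\tfrac{c_1}{2}\mathcal E\le C|F|_{\mathbb{L}^2}^2$, and Gronwall's lemma yields
\[
\mathcal E(t)\le \mathcal E(0)\,e^{-c_1 t/2} + C\int_0^t e^{-c_1(t-s)/2}|F(s)|_{\mathbb{L}^2}^2\,ds.
\]
Splitting the integral over unit intervals and summing a geometric series bounds it by a multiple of $\opnorm{F}_2^2\le r^4$, independently of $T$. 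For $r$ sufficiently small, the resulting bound $\mathcal E(t)\le r^2+C'r^4<R^2$ contradicts the definition of $T_*$ unless $T_*=T_m$. Thus $\mathcal E$ remains below $R^2$ on the whole interval $[0,T_m)$, and the alternative of Theorem \ref{altgd} forces $T_m=T$.

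The step I expect to be the main obstacle is the estimation of $|U|_{\mathbb{L}^{2(q-1)}}^{2(q-1)}$ when $q$ is close to $2^*$: the Gagliardo--Nirenberg exponent of $|\partial\varphi(U)|_{\mathbb{L}^2}$ can exceed $2$ in that regime, blocking a direct Young absorption. I would overcome this by pulling out $|\partial\varphi(U)|_{\mathbb{L}^2}^2$ as a factor and absorbing the residual using the smallness of $\mathcal E$, trading the lack of a clean absorption for the smallness hypothesis we are entitled to assume.
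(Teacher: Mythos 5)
Your proposal is correct and rests on the same pillars as the paper's proof: the reduction via Theorem \ref{altgd} to a uniform bound on \(\mathcal{E}(t)=\varphi(U(t))+\frac12|U(t)|_{\mathbb{L}^2}^2\), the use of \(\gamma<0\) as the sole source of zeroth-order dissipation, the observation that every bad term is superlinear in \(\mathcal E\) (via \eqref{GNH1} and \eqref{LKJH}, the latter being exactly the \(\mathbb{L}^2\)--\(\mathbb{H}^2\) interpolation you invoke for \(|U|_{\mathbb{L}^{2(q-1)}}^{2(q-1)}\)), a continuity bootstrap at the first exit time, and the unit-interval splitting of the forcing integral to get \(T\)-independence (this is precisely the content of the paper's Lemma \ref{Gtyineq}). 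Where you differ is in the bookkeeping: you test against \(U+\partial\varphi(U)\) at once and run a single exponential Gronwall estimate for \(\mathcal E\), whereas the paper proceeds in two chained stages --- first testing against \(U\) alone and using the coercivity Lemma \ref{coergd} to obtain \(\sup_t|U(t)|_{\mathbb{L}^2}\leq L_1r\) together with the time-averaged bound \(\sup_t\int_t^{t+1}\varphi(U)\,d\tau\leq L_2r^2\), and then feeding these into the \(\partial\varphi(U)\)-estimate \eqref{7.18}, which is integrated over a unit interval \([\tilde t_1-1,t_1]\) in a uniform-Gronwall fashion. Your one-shot version is cleaner and avoids the intermediate time-averaged bound, at the price of tracking several superlinear exponents simultaneously; the paper's version is more modular and isolates the coercivity statement as a reusable lemma. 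One small remark on your final paragraph: the obstacle you anticipate does not actually arise, since for subcritical \(q\) the exponent of \(|\Delta U|_{\mathbb{L}^2}\) in \eqref{LKJH} is \(2-\theta\) with \(\theta=2q-N(q-2)>0\), hence always strictly below \(2\); Young's inequality therefore applies directly, though the fallback you describe (factoring out \(|\partial\varphi(U)|_{\mathbb{L}^2}^2\) and absorbing the remainder by smallness of \(\mathcal E\) inside the bootstrap region) would also be legitimate.
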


\section{Auxiliary Problems}\label{sec-4}
In this section, we consider the following auxiliary equations and show their global well-posedness.
\[
\tag*{(AE)\(^\varepsilon\)}
\left\{
\begin{aligned}
&\frac{dU}{dt}(t) \!+\! \lambda\partial\varphi(U)\!+\!\alpha I\partial\varphi(U)\!+\!\varepsilon\partial\psi_r(U)\!-\!(\kappa\!+\! \beta I) \partial \psi_q(U) \!-\! \gamma U \!=\! F(t),\ t \!\in\! (0,T),\\
&U(0) = U_0.
\end{aligned}
\right.
\]

\setcounter{tmpcnt}{1}
\begin{Prop}
Let \(\Omega \subset \mathbb{R}^N\) be a general domain of uniformly \({\rm C}^2\)-regular class
, \(F \in \mathcal{H}^T\), \(2 < q < 2^*\), \(\varepsilon>0\) and \(q<r<2^*\).\label{GWP}
Then for all \(U_0 \in \mathbb{H}_0^1(\Omega) = {\rm D}(\varphi)\), there exists a unique function \(U(t) \in {\rm C}([0, T]; \mathbb{H}^1_0(\Omega))\) satisfying:
\begin{enumerate}
\renewcommand{\labelenumi}{(\roman{enumi})}
\item \(U \in {\rm W}^{1, 2}(0, T; \mathbb{L}^2(\Omega))\),
\item \(U(t) \in {\rm D}(\partial\varphi) \subset {\rm D}(\partial\psi_r)\) for a.e. \(t \in [0, T]\) and satisfies (AE)\(^\varepsilon\) for a.e. \(t \in [0, T]\),
\item \(\partial\varphi(U(\cdot)), \partial\psi_r(U(\cdot)) \in \mathcal{H}^{T}\).
\end{enumerate}
\end{Prop}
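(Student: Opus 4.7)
The plan is to regularize the non-monotone term $\partial\psi_q$ by its Yosida approximation $\partial\psi_{q,\mu}$, solve the regularized equation (AE)$^{\varepsilon,\mu}$ for each $\mu>0$, derive a priori bounds independent of $\mu$, and pass to the limit $\mu\downarrow 0$. The dominant dissipative term $\varepsilon\partial\psi_r$ with $r>q$ is what absorbs $-(\kappa+\beta I)\partial\psi_q$ and promotes the a priori bounds from local to global in time; this quantitative dominance is the substitute for the compactness argument of \cite{KO2}, which is unavailable on a general (possibly unbounded) $\Omega$.

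I would first settle (AE)$^{\varepsilon,\mu}$. By Lemma~\ref{Lem:2.3}, the principal part $\lambda\partial\varphi+\varepsilon\partial\psi_r=\partial(\lambda\varphi+\varepsilon\psi_r)$ is a maximal monotone subdifferential in $\mathbb{L}^2(\Omega)$. The unbounded skew term $\alpha I\partial\varphi$ may be appended to this principal part without destroying maximal monotonicity, since the angle relation \eqref{orth:IU} yields $(\alpha I\partial\varphi(W),W)_{\mathbb{L}^2}=0$ for every $W\in{\rm D}(\partial\varphi)$, while the remaining operator $(\kappa+\beta I)\partial\psi_{q,\mu}+\gamma I$ is globally Lipschitz on $\mathbb{L}^2$ (as $\partial\psi_{q,\mu}$ is $1/\mu$-Lipschitz). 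The abstract framework of \cite{O2} then yields a unique solution $U_\mu$ satisfying the regularity (i)--(iii) on a maximal interval $[0,T_\mu)$.

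The second step is to produce a priori bounds uniform in $\mu$. Testing (AE)$^{\varepsilon,\mu}$ with $U_\mu$ and using the angle conditions \eqref{orth:mu:IU} to eliminate the $\alpha$- and $\beta$-contributions, together with \eqref{asdf}, reduces matters to dominating $\kappa|U_\mu|_{\mathbb{L}^q}^q$ by $\varepsilon|U_\mu|_{\mathbb{L}^r}^r$; this is achieved by the interpolation $|U_\mu|_{\mathbb{L}^q}^q\leq C|U_\mu|_{\mathbb{L}^2}^{q(1-\theta)}|U_\mu|_{\mathbb{L}^r}^{q\theta}$ with $q\theta<r$ (which follows from $2<q<r$) and Young's inequality, and yields via Gronwall uniform bounds of $U_\mu$ in $L^\infty(0,T;\mathbb{L}^2)$ together with $L^2(0,T)$-bounds of $\sqrt{\varphi(U_\mu)}$ and $|U_\mu|_{\mathbb{L}^r}^{r/2}$. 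Testing next with $dU_\mu/dt$, the chain rule for the subdifferential $\partial(\lambda\varphi+\varepsilon\psi_r)$ controls $|dU_\mu/dt|_{\mathbb{L}^2}^2+\tfrac{d}{dt}(\lambda\varphi+\varepsilon\psi_r)(U_\mu)$; the Sobolev subcriticality $q<2^*$ combined with Young's inequality and \eqref{as} then absorbs the non-monotone and skew contributions, delivering uniform bounds of $U_\mu$ in $L^\infty(0,T;\mathbb{H}^1_0)\cap W^{1,2}(0,T;\mathbb{L}^2)$ and of $\partial\varphi(U_\mu),\partial\psi_r(U_\mu)$ in $\mathcal{H}^T$. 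In particular $T_\mu=T$ by continuation.

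Finally, forming the equation for the difference $U_\mu-U_\nu$, testing it against itself, and invoking the monotonicity of $\partial(\lambda\varphi+\varepsilon\psi_r)$ together with the angle conditions to dispose of the skew terms, the remaining non-monotone contribution is controlled by the pointwise local-Lipschitz bound of Lemma~\ref{locLip1}, H\"older's inequality, Lemma~\ref{ACC}, and the uniform $\mathbb{L}^{2(q-1)}$ a priori bound that Sobolev subcriticality delivers from the $\mathbb{H}^1_0$ bound. Gronwall then shows $\{U_\mu\}$ is Cauchy in $C([0,T];\mathbb{L}^2(\Omega))$; weak compactness together with demiclosedness identifies $\partial\varphi(U_\mu)\rightharpoonup\partial\varphi(U)$ and $\partial\psi_r(U_\mu)\rightharpoonup\partial\psi_r(U)$ in $\mathcal{H}^T$, while $J_\mu^{\partial\psi_q}U_\mu=U_\mu-\mu\partial\psi_{q,\mu}(U_\mu)\to U$ strongly in $\mathcal{H}^T$ (since $\partial\psi_{q,\mu}(U_\mu)$ is bounded uniformly in $\mathcal{H}^T$) combined with demiclosedness of $\partial\psi_q$ gives $\partial\psi_{q,\mu}(U_\mu)\rightharpoonup\partial\psi_q(U)$. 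Uniqueness for (AE)$^\varepsilon$ itself follows by repeating the difference argument directly between two solutions. The main obstacle I anticipate is precisely this Cauchy-in-$\mathbb{L}^2$ estimate: because $\Omega$ is unbounded there is no Rellich--Kondrachov shortcut, so the identification of the weak limit of the non-monotone term hinges entirely on the quantitative dominance of $\varepsilon\psi_r$ over $\kappa\psi_q$, which is precisely what the auxiliary equation was engineered to supply.
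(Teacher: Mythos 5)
Your overall strategy coincides with the paper's: Yosida-regularize the non-monotone term, prove $\mu$-uniform a priori estimates using the angle conditions and the interpolation of $\psi_q$ between $\psi_r$ and the $\mathbb{L}^2$-norm, show the approximations form a Cauchy net in ${\rm C}([0,T];\mathbb{L}^2(\Omega))$, and conclude by demiclosedness. (Your choice to test with $dU_\mu/dt$ rather than with $\partial\varphi(U_\mu)$ and $\partial\psi_r(U_\mu)$ separately is a harmless variant: the individual $\mathcal{H}^T$-bounds on $\partial\varphi(U_\mu)$ and $\partial\psi_r(U_\mu)$ then have to be recovered from the equation, e.g.\ by splitting $|\lambda\partial\varphi(U)+\varepsilon\partial\psi_r(U)|_{\mathbb{L}^2}^2$ with the angle condition \eqref{angle}.)

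There is, however, one genuine gap, and it sits exactly at the step you yourself identify as the crux. In the difference estimate for $U_\mu-U_\nu$, the non-monotone contribution is $\bigl((\kappa+\beta I)(\partial\psi_{q,\mu}(U_\mu)-\partial\psi_{q,\nu}(U_\nu)),U_\mu-U_\nu\bigr)_{\mathbb{L}^2}$, and the two operators carry \emph{different} Yosida parameters. The tools you list (Lemma \ref{locLip1}, H\"older, Lemma \ref{ACC}, the uniform $\mathbb{L}^{2(q-1)}$ bound) only control the same-parameter piece $\partial\psi_q(J_\mu^{\partial\psi_q}U_\mu)-\partial\psi_q(J_\mu^{\partial\psi_q}U_\nu)$; applied alone they produce a differential inequality of the form $\frac{d}{dt}|U_\mu-U_\nu|_{\mathbb{L}^2}^2\leq C|U_\mu-U_\nu|_{\mathbb{L}^2}^2$ with zero initial difference, and Gronwall would then give $U_\mu\equiv U_\nu$ --- which is false and shows the estimate cannot close as written. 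What is missing is the splitting \eqref{gfds} together with the resolvent comparison $J_\mu^{\partial\psi_q}V-J_\nu^{\partial\psi_q}V=\nu\partial\psi_q(J_\nu^{\partial\psi_q}V)-\mu\partial\psi_q(J_\mu^{\partial\psi_q}V)$, whence $|J_\mu^{\partial\psi_q}U_\nu-J_\nu^{\partial\psi_q}U_\nu|_{\mathbb{L}^2}\leq(\mu+\nu)\bigl(|\partial\psi_q(J_\nu^{\partial\psi_q}U_\nu)|_{\mathbb{L}^2}+|\partial\psi_q(J_\mu^{\partial\psi_q}U_\nu)|_{\mathbb{L}^2}\bigr)$ as in \eqref{poiuyt}. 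Combined with the a priori bound \eqref{2eAEm-2} this yields the inhomogeneous term $(\mu+\nu)\bar{\bar{C}}$ of \eqref{LKJ}, which is what actually makes $\{U_\mu\}$ Cauchy. You should add this resolvent estimate explicitly; the rest of your argument then goes through as in the paper.
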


\begin{proof}
We consider another approximate equation:
\[
\tag*{(AE)\(_\mu^\varepsilon\)}
\left\{
\begin{aligned}
&\frac{dU}{dt}(t) + \lambda\partial\varphi(U)+\alpha I\partial\varphi(U)+\varepsilon\partial\psi_r(U)-(\kappa+ \beta I) \partial \psi_{q,\mu}(U) - \gamma U = F(t),\\[-1mm]
&\hspace{80mm} t \in (0,T),\\
&U(0) = U_0.
\end{aligned}
\right.
\]
Since \(r<2^*\) implies \(\mathrm{D}(\varphi)\subset\mathrm{D}(\psi_r)\) and \(U\mapsto-(\kappa+\beta I)\partial\psi_{q,\mu}(U)-\gamma U\) is Lipschitz continuous in \(\mathbb{L}^2(\Omega)\), by virtue of Lemma 2, there exists a unique global solution \(U_\mu\) of (AE)\(_\mu^\varepsilon\) satisfying (i)-(iii) of Proposition \ref{GWP} (see \cite{B1} and Proposition 5.1 of \cite{KO2}).

To see the convergence of \(U_\mu\) as \(\mu \downarrow 0\), we establish a priori estimates.
For this purpose, we frequently use the following interpolation inequalities
\begin{equation}\label{inter}
\begin{aligned}[t]
&|U|_{\mathbb{L}^q}^q \leq |U|_{\mathbb{L}^r}^{r\theta}|U|_{\mathbb{L}^2}^{2(1-\theta)},\\
&|U|_{\mathbb{L}^{2(q-1)}}^{2(q-1)} \leq |U|_{\mathbb{L}^{2(r-1)}}^{2(r-1)\theta}|U|_{\mathbb{L}^2}^{2(1-\theta)}=|\partial\psi_r(U)|_{\mathbb{L}^2}^{2\theta}|U|_{\mathbb{L}^2}^{2(1-\theta)},\\
&\theta=\frac{q-2}{r-2}.
\end{aligned}
\end{equation}

\begin{Lem}
Let \(U=U_\mu\) be the solution of (AE)\(_\mu^\varepsilon\).\label{1eAEm}
Then there exists \(C_1\) depending only on \(\lambda, \kappa, \beta, \gamma, \varepsilon\), \(T\), \(|U_0|_{\mathbb{L}^2}\) and \(\|F\|_{\mathcal{H}^T}\) but not on \(\mu\) such that
\begin{equation}
\label{1eAEm-1}
\sup_{t \in [0, T]}|U(t)|_{\mathbb{L}^2}^2 + \int_{0}^{T}\varphi(U(t)) dt + \int_0^T\psi_r(U(t))dt \leq C_1.
\end{equation}
\end{Lem}
\begin{proof}
Multiplying (AE)\(_\mu^\varepsilon\) by \(U\) and noting orthogonalities \eqref{orth:IU}, \eqref{orth:mu:IU}, \eqref{asdf} with \(r=q\) and \eqref{inter}, we obtain
\begin{equation}\label{lkjhgfds}
\begin{aligned}
\frac{1}{2}\frac{d}{dt}|U|_{\mathbb{L}^2}^2
+2\lambda\varphi(U)
+r\varepsilon\psi_r(U)
&\leq
\kappa\psi_q(U)
+\gamma_+|U|_{\mathbb{L}^2}^2
+|F|_{\mathbb{L}^2}|U|_{\mathbb{L}^2}\\
&\leq \frac{\kappa}{q}|U|^{r\theta}_{\mathbb{L}^r}|U|^{2(1-\theta)}_{\mathbb{L}^2}
+\gamma_+|U|_{\mathbb{L}^2}^2
+|F|_{\mathbb{L}^2}|U|_{\mathbb{L}^2}\\
&\leq \frac{\varepsilon}{2}|U|^r_{\mathbb{L}^r}
+(C_\varepsilon+\gamma_++1)|U|^2_{\mathbb{L}^2}
+\frac{1}{4}|F|_{\mathbb{L}^2}^2,
\end{aligned}
\end{equation}
where \(C_\varepsilon = (1-\theta)\left\{\frac{\kappa}{q}\left(\frac{2\theta}{\varepsilon }\right)^\theta\right\}^{\frac{1}{1-\theta}}\) and \(\gamma_+ = \max\{0,\gamma\}\).
Applying Gronwall's inequality to \eqref{lkjhgfds}, we obtain \eqref{1eAEm-1}.
\end{proof}

\begin{Lem}
Let \(U=U_\mu\) be the solution of (AE)\(_\mu^\varepsilon\).\label{2eAEm}
Then there exists \(C_2\) depending only on \(\lambda, \kappa, \beta, \gamma,\varepsilon\), \(T\), \(|U_0|_{\mathbb{L}^2}, \varphi(U_0)\) and \(\|F\|_{\mathcal{H}^T}\) but not on \(\mu\) such that
\begin{equation}
\label{2eAEm-1}
\begin{aligned}
&\sup_{t \in [0, T]}\varphi(U(t)) + \sup_{t \in [0, T]}\psi_r(U(t))\\
&+ \int_{0}^{T}|\partial\varphi(U(t))|_{\mathbb{L}^2}^2dt + \int_0^T|\partial\psi_r(U(t))|_{\mathbb{L}^2}^2dt +\int_{0}^{T}\left|\frac{dU}{dt}(t)\right|_{\mathbb{L}^2}^2\!dt \leq C_2.
\end{aligned}
\end{equation}
\end{Lem}
\begin{proof}
Multiplying (AE)\(_\mu^\varepsilon\) by \(\partial\varphi(U)\) and \(\partial\psi_r(U)\), by \eqref{orth:Ipsi}, \eqref{angle} and \eqref{as}, we have
\begin{align}
\label{1qw}
&\frac{d}{dt}\varphi(U(t))+\lambda|\partial\varphi(U)|_{\mathbb{L}^2}^2
\leq
\begin{aligned}[t]
&\sqrt{\kappa^2+\beta^2}|\partial\psi_q(U)|_{\mathbb{L}^2}|\partial\varphi(U)|_{\mathbb{L}^2}
+2\gamma_+\varphi(U)\\
&+|F|_{\mathbb{L}^2}|\partial\varphi(U)|_{\mathbb{L}^2},
\end{aligned}\\
\label{2qw}
&\frac{d}{dt}\psi_r(U(t))+\varepsilon|\partial\psi_r(U)|_{\mathbb{L}^2}^2
\leq
\begin{aligned}[t]
&\kappa|\partial\psi_q(U)|_{\mathbb{L}^2}|\partial\psi_r(U)|_{\mathbb{L}^2}
+r\gamma_+\psi_r(U)\\
&+|F|_{\mathbb{L}^2}|\partial\psi_r(U)|_{\mathbb{L}^2}.
\end{aligned}
\end{align}

Using H\"older's inequality, we obtain
\begin{align}
\label{1qw2}
&\frac{d}{dt}\varphi(U(t))+\frac{\lambda}{2}|\partial\varphi(U)|_{\mathbb{L}^2}^2
\leq
\frac{\kappa^2+\beta^2}{\lambda}|\partial\psi_q(U)|_{\mathbb{L}^2}^2
+2\gamma_+\varphi(U)
+\frac{1}{\lambda}|F|_{\mathbb{L}^2}^2,\\
\label{2qw2}
&\frac{d}{dt}\psi_r(U(t))+\frac{\varepsilon}{2}|\partial\psi_r(U)|_{\mathbb{L}^2}^2
\leq
\frac{\kappa^2}{\varepsilon}|\partial\psi_q(U)|_{\mathbb{L}^2}^2
+r\gamma_+\psi_r(U)
+\frac{1}{\varepsilon}|F|_{\mathbb{L}^2}^2.
\end{align}

We add \eqref{1qw2} to \eqref{2qw2} and apply \eqref{inter} to obtain
\begin{equation}
\begin{aligned}
&\frac{d}{dt}\varphi(U(t))
+\frac{d}{dt}\psi_r(U(t))
+\frac{\lambda}{2}|\partial\varphi(U)|_{\mathbb{L}^2}^2
+\frac{\varepsilon}{2}|\partial\psi_r(U)|_{\mathbb{L}^2}^2\\
&\leq
\left(\frac{\kappa^2+\beta^2}{\lambda}+\frac{\kappa^2}{\varepsilon}\right)|\partial\psi_q(U)|_{\mathbb{L}^2}^2
+2\gamma_+\varphi(U)+r\gamma_+\psi_r(U)
+\left(\frac{1}{\lambda}+\frac{1}{\varepsilon}\right)|F|_{\mathbb{L}^2}^2\\
&\leq
\left(\frac{\kappa^2+\beta^2}{\lambda}+\frac{\kappa^2}{\varepsilon}\right)
|\partial\psi_r(U)|_{\mathbb{L}^2}^{2\theta}|U|_{\mathbb{L}^2}^{2(1-\theta)}
+2\gamma_+\varphi(U)+r\gamma_+\psi_r(U)
+\left(\frac{1}{\lambda}+\frac{1}{\varepsilon}\right)|F|_{\mathbb{L}^2}^2\\
&\leq
\frac{\varepsilon}{4}|\partial\psi_r(U)|_{\mathbb{L}^2}^2
+\tilde{C}_\varepsilon|U|_{\mathbb{L}^2}^2
+2\gamma_+\varphi(U)+r\gamma_+\psi_r(U)
+\left(\frac{1}{\lambda}+\frac{1}{\varepsilon}\right)|F|_{\mathbb{L}^2}^2,
\end{aligned}
\end{equation}
where
\[
\tilde{C}_\varepsilon
=
(1-\theta)
\left\{
\left(\frac{4\theta}{\varepsilon}\right)^\theta
\left(\frac{\kappa^2+\beta^2}{\lambda}+\frac{\kappa^2}{\varepsilon}\right)
\right\}^{\frac{1}{1-\theta}}.
\]
Then \eqref{1eAEm-1} and Gronwall's inequality assure the estimates for the first four terms in \eqref{2eAEm-1}.
Hence the estimate for \(\left|\frac{dU}{dt}\right|_{\mathrm{L}^2(0,T;\mathbb{L}^2(\Omega))}\) follows from the equation.
\end{proof}
Noting \eqref{inter} and the above estimates, we can deduce
\begin{equation}\label{2eAEm-2}
\sup_{t\in[0,T]}\psi_q(U(t))+\int_0^T|\partial\psi_q(U(t))|_{\mathbb{L}^2}^2dt\leq C_2.
\end{equation}

Let \(U_\mu\) and \(U_\nu\) be solutions to
\begin{align}
\tag*{(AE)\(_\mu\)}
&\frac{dU_\mu}{dt}(t) + (\lambda+\alpha I)\partial\varphi(U_\mu)+\varepsilon\partial\psi_r(U_\mu)-(\kappa+ \beta I) \partial \psi_{q,\mu}(U_\mu) - \gamma U_\mu = F(t),\\
\tag*{(AE)\(_\nu\)}
&\frac{dU_\nu}{dt}(t) + (\lambda+\alpha I)\partial\varphi(U_\nu)+\varepsilon\partial\psi_r(U_\nu)-(\kappa+ \beta I) \partial \psi_{q,\nu}(U_\nu) - \gamma U_\nu = F(t)
\end{align}
with initial condition \(U_\mu(0) = U_\nu(0) = U_0\) respectively.

Multiplying (AE)\(_\mu\)\(-\)(AE)\(_\nu\) by \(U_\mu-U_\nu\) and using 
monotonicity of \(I\partial\varphi\) and \(\partial\psi_r\), we obtain
\[
\begin{aligned}
&\frac{1}{2}\frac{d}{dt}|U_\mu-U_\nu|_{\mathbb{L}^2}^2
+2\lambda\varphi(U_\mu-U_\nu)\\
&\leq
\bigl((\kappa+ \beta I)(\partial\psi_{q,\mu}(U_\mu)-\partial\psi_{q,\nu}(U_\nu)), U_\mu-U_\nu\bigr)_{\mathbb{L}^2}
+\gamma_+|U_\mu-U_\nu|_{\mathbb{L}^2}^2.
\end{aligned}
\]
Integrating the above inequality over \([0,T]\) with respect to \(t\), we obtain
\begin{equation}\label{gfd}
\begin{aligned}
&\frac{1}{2}|U_\mu-U_\nu|_{\mathbb{L}^2}^2
+2\lambda \int_0^T\varphi(U_\mu-U_\nu)dt
\\
&\leq
\bigl((\kappa+ \beta I)(\partial\psi_{q,\mu}(U_\mu)-\partial\psi_{q,\nu}(U_\nu)), U_\mu-U_\nu\bigr)_{\mathcal{H}^T}
+\gamma_+\!\int_0^T|U_\mu-U_\nu|_{\mathbb{L}^2}^2dt.
\end{aligned}
\end{equation}

The first term on the right hand side of \eqref{gfd} can be decomposed in the following way:
\begin{equation}\label{gfds}
\begin{aligned}
&\bigl((\kappa+ \beta I)(\partial\psi_{q,\mu}(U_\mu)-\partial\psi_{q,\nu}(U_\nu)), U_\mu-U_\nu\bigr)_{\mathcal{H}^T}\\
&=\int_0^T
\bigl((\kappa+ \beta I)(\partial\psi_q(J_\mu^{\partial\psi_q}U_\mu)-\partial\psi_q(J_\nu^{\partial\psi_q}U_\nu)), U_\mu-U_\nu\bigr)_{\mathbb{L}^2}
dt\\
&=
\begin{aligned}[t]
&\int_0^T
\bigl((\kappa+ \beta I)(\partial\psi_q(J_\mu^{\partial\psi_q}U_\mu)-\partial\psi_q(J_\mu^{\partial\psi_q}U_\nu)), U_\mu-U_\nu\bigr)_{\mathbb{L}^2}
dt\\
&+
\int_0^T
\bigl((\kappa+ \beta I)(\partial\psi_q(J_\mu^{\partial\psi_q}U_\nu)-\partial\psi_q(J_\nu^{\partial\psi_q}U_\nu)), U_\mu-U_\nu\bigr)_{\mathbb{L}^2}
dt.
\end{aligned}
\end{aligned}
\end{equation}

Put \(\tilde{C} := \sqrt{\kappa^2+\beta^2}d_q\).
Then by \eqref{locLip2} and \eqref{ACC1}, we have
\begin{equation}\label{gfdsa}
\begin{aligned}
&\int_0^T
\bigl((\kappa+ \beta I)(\partial\psi_q(J_\mu^{\partial\psi_q}U_\mu)-\partial\psi_q(J_\mu^{\partial\psi_q}U_\nu)), U_\mu-U_\nu\bigr)_{\mathbb{L}^2}
dt\\
&=
\int_0^T
\int_\Omega
(\kappa+ \beta I)(|J_\mu^{\partial\psi_q}U_\mu|^{q-2}J_\mu^{\partial\psi_q}U_\mu-|J_\mu^{\partial\psi_q}U_\nu|^{q-2}J_\mu^{\partial\psi_q}U_\nu), U_\mu-U_\nu\bigr)_{\mathbb{L}^2}
dxdt\\
&\leq
\tilde{C}
\int_0^T
\int_\Omega
\bigl(
|J_\mu^{\partial\psi_q}U_\mu|^{q-2}
+
|J_\mu^{\partial\psi_q}U_\nu|^{q-2}
\bigr)
|J_\mu^{\partial\psi_q}U_\mu
-
J_\mu^{\partial\psi_q}U_\nu|
|U_\mu-U_\nu|
dxdt\\
&\leq
\tilde{C}q^{\frac{q-2}{q}}
\int_0^T
\bigl(
\psi_q(J_\mu^{\partial\psi_q}U_\mu)^{\frac{q-2}{q}}
\!+\!
\psi_q(J_\mu^{\partial\psi_q}U_\nu)^{\frac{q-2}{q}}
\bigr)
|J_\mu^{\partial\psi_q}U_\mu
\!-\!
J_\mu^{\partial\psi_q}U_\nu|_{\mathbb{L}^q}
|U_\mu-U_\nu|_{\mathbb{L}^q}
dt
\\
&\leq
\tilde{C}q^{\frac{q-2}{q}}
\int_0^T
\bigl(
\psi_q(J_\mu^{\partial\psi_q}U_\mu)^{\frac{q-2}{q}}
+
\psi_q(J_\mu^{\partial\psi_q}U_\nu)^{\frac{q-2}{q}}
\bigr)
|U_\mu-U_\nu|_{\mathbb{L}^q}^2
dt\\
&\leq
\lambda
\int_0^T
\varphi(U_\mu-U_\nu)
dt
+
\bar{C}
\int_0^T
|U_\mu-U_\nu|_{\mathbb{L}^2}^2
dt,
\end{aligned}
\end{equation}
where
we used estimates \eqref{2eAEm-2}, \eqref{asd}, the interpolation inequality:
\begin{equation}\label{GNH1}
\begin{aligned}[t]
&|U|_{\mathbb{L}^q}^q
\begin{aligned}[t]
&\leq
C_b^{\frac{q}{2}}
|\nabla U|_{\mathbb{L}^2}^{2\cdot\frac{q-\xi}{q}}
|U|_{\mathbb{L}^2}^{2\cdot\frac{\xi}{q}}\\
&\leq
\frac{\lambda}{2}|\nabla U|_{\mathbb{L}^2}^2
+
\bar{C}|U|_{\mathbb{L}^2}^2,
\end{aligned}\\
&\xi=\frac{2^*-q}{2(N-2)}\in(0,q)\ \mbox{for}\ N\geq 3\ \mbox{and}\ \xi=\frac{1}{2}\ \mbox{for}\ N=1,2,
\end{aligned}
\end{equation}
and Young's inequality and \(\bar{C}\) is a constant depending on \(\lambda,\tilde{C},q,C_b\) and \(\xi\).

As for the second term in \eqref{gfds}, 
again by \eqref{locLip2}, we get
\begin{equation}\label{bvc}
\begin{aligned}
&\int_0^T
\bigl((\kappa+ \beta I)(\partial\psi_q(J_\mu^{\partial\psi_q}U_\nu)-\partial\psi_q(J_\nu^{\partial\psi_q}U_\nu)), U_\mu-U_\nu\bigr)_{\mathbb{L}^2}
dt\\
&=
\int_0^T
\int_\Omega
(\kappa+ \beta I)(|J_\mu^{\partial\psi_q}U_\nu|^{q-2}J_\mu^{\partial\psi_q}U_\nu-|J_\nu^{\partial\psi_q}U_\nu|^{q-2}J_\nu^{\partial\psi_q}U_\nu), U_\mu-U_\nu\bigr)_{\mathbb{L}^2}
dxdt\\
&\leq
\tilde{C}
\int_0^T
\int_\Omega
\bigl(
|J_\mu^{\partial\psi_q}U_\nu|^{q-2}
+
|J_\nu^{\partial\psi_q}U_\nu|^{q-2}
\bigr)
|J_\mu^{\partial\psi_q}U_\nu
-
J_\nu^{\partial\psi_q}U_\nu|
|U_\mu-U_\nu|
dxdt\\
&\leq
\tilde{C}
\sum_{i,j=\mu,\nu}
\int_0^T
\int_\Omega
|J_i^{\partial\psi_q}U_\nu|^{q-2}
|J_\mu^{\partial\psi_q}U_\nu
-
J_\nu^{\partial\psi_q}U_\nu|
|U_j|
dxdt\\
&\leq
\tilde{C}
\sum_{i,j=\mu,\nu}
\int_0^T
|J_i^{\partial\psi_q}U_\nu|_{\mathbb{L}^{2(q-1)}}^{q-2}
|J_\mu^{\partial\psi_q}U_\nu
-
J_\nu^{\partial\psi_q}U_\nu|_{\mathbb{L}^2}
|U_j|_{\mathbb{L}^{2(q-1)}}
dt,
\end{aligned}
\end{equation}
where we note
\[
\frac{q-2}{2(q-1)}+\frac{1}{2}+\frac{1}{2(q-1)}=1.
\]

Let \(V_1=J_\mu^{\partial\psi_q}U_\nu\) and \(V_2=J_\nu^{\partial\psi_q}U_\nu\), then the definition of resolvent operator yields \(U_\nu=V_1+\mu\partial\psi_q(V_1)=V_2+\nu\partial\psi_q(V_2)\), that is
\[
J_\mu^{\partial\psi_q}U_\nu-J_\nu^{\partial\psi_q}U_\nu
=
V_1-V_2
=
\nu\partial\psi_q(V_2)
-
\mu\partial\psi_q(V_1),
\]
whence follows
\begin{equation}\label{poiuyt}
\begin{aligned}
|J_\mu^{\partial\psi_q}U_\nu
-
J_\nu^{\partial\psi_q}U_\nu|_{\mathbb{L}^2}
&\leq
(\mu+\nu)
(
|\partial\psi_q(V_2)|_{\mathbb{L}^2}
+
|\partial\psi_q(V_1)|_{\mathbb{L}^2}
)\\
&=
(\mu+\nu)
(
|\partial\psi_q(J_\nu^{\partial\psi_q}U_\nu)|_{\mathbb{L}^2}
+
|\partial\psi_q(J_\mu^{\partial\psi_q}U_\nu)|_{\mathbb{L}^2}
).
\end{aligned}
\end{equation}
Combining \eqref{bvc} with \eqref{poiuyt}, we have
\begin{equation}\label{LKJ}
\begin{aligned}
&\int_0^T
\bigl((\kappa+ \beta I)(\partial\psi_q(J_\mu^{\partial\psi_q}U_\nu)-\partial\psi_q(J_\nu^{\partial\psi_q}U_\nu)), U_\mu-U_\nu\bigr)_{\mathbb{L}^2}
dt\\
&\leq
(\mu+\nu)\tilde{C}
\sum_{i,j,k=\mu,\nu}
\int_0^T
|J_i^{\partial\psi_q}U_\nu|_{\mathbb{L}^{2(q-1)}}^{q-2}
|\partial\psi_q(J_k^{\partial\psi_q}U_\nu)|_{\mathbb{L}^2}
|U_j|_{\mathbb{L}^{2(q-1)}}
dt\\
&\leq
(\mu+\nu)\tilde{C}
\sum_{i,j,k=\mu,\nu}
\begin{aligned}[t]
&\left\{
\int_0^T
|J_i^{\partial\psi_q}U_\nu|_{\mathbb{L}^{2(q-1)}}^{2(q-1)}
dt
\right\}^{\frac{q-2}{2(q-1)}}\\
&\times\left\{
\int_0^T
|\partial\psi_q(J_k^{\partial\psi_q}U_\nu)|_{\mathbb{L}^2}^2
dt
\right\}^{\frac{1}{2}}
\left\{
\int_0^T
|U_j|_{\mathbb{L}^{2(q-1)}}^{2(q-1)}
dt
\right\}^{\frac{1}{2(q-1)}}
\end{aligned}\\
&\leq(\mu+\nu)\bar{\bar{C}},
\end{aligned}
\end{equation}
where \(\bar{\bar{C}} = 8\tilde{C}C_2\) and we used \eqref{2eAEm-2}, \eqref{as} and the fact that
\[
|\partial\psi_q(U)|_{\mathbb{L}^2}^2
=
\bigl||U|^{q-2}U\bigr|_{\mathbb{L}^2}^2
=
|U|_{\mathbb{L}^{2(q-1)}}^{2(q-1)}.
\]

Thus in view of \eqref{gfd}, \eqref{gfdsa} and \eqref{LKJ}, we obtain
\[
\frac{1}{2}|U_\mu-U_\nu|_{\mathbb{L}^2}^2
+\lambda \int_0^T\varphi(U_\mu-U_\nu)dt
\leq
(\mu+\nu)\bar{\bar{C}}
+(\gamma_++\bar{C})\int_0^T|U_\mu-U_\nu|_{\mathbb{L}^2}^2dt.
\]
Therefore Gronwall's inequality yields that \(\{U_\mu\}_{\mu>0}\) forms a Cauchy net in \(\mathrm{C}([0,T];\mathbb{L}^2(\Omega))\) as \(\mu,\nu\downarrow0\).

By the a priori estimates \eqref{2eAEm-1} and \eqref{2eAEm-2}, we obtain the following convergences of subsequence \(\{U_{\mu_n}\}_{n\in\mathbb{N}}\subset\{U_\mu\}_{\mu>0}\) as \(n\to\infty\):
\begin{align}\label{aaaa}
U_{\mu_n}&\rightarrow U&&\mbox{strongly in}\ {\rm C}([0, T]; \mathbb{L}^2(\Omega)),\\
\frac{dU_{\mu_n}}{dt}&\rightharpoonup \frac{dU}{dt}&&\mbox{weakly in}\ {\rm L}^2(0, T; \mathbb{L}^2(\Omega)),\\
\partial\varphi(U_{\mu_n})&\rightharpoonup \partial\varphi(U)&&\mbox{weakly in}\ {\rm L}^2(0, T; \mathbb{L}^2(\Omega)),\\
\partial\psi_r(U_{\mu_n})&\rightharpoonup \partial\psi_r(U)&&\mbox{weakly in}\ {\rm L}^2(0, T; \mathbb{L}^2(\Omega)),\\
\partial\psi_q(U_{\mu_n})&\rightharpoonup \partial\psi_q(U)&&\mbox{weakly in}\ {\rm L}^2(0, T; \mathbb{L}^2(\Omega)),\\
\partial\psi_{q,{\mu_n}}(U_{\mu_n})&\rightharpoonup \partial\psi_q(U)&&\mbox{weakly in}\ {\rm L}^2(0, T; \mathbb{L}^2(\Omega)),
\end{align}
where we used the demi-closedness of \(\frac{d}{dt}, \partial\varphi, \partial\psi_r, \partial\psi_q\).
We note that \eqref{aaaa} implies \(J_{\mu_n}^{\partial\psi_q}U_{\mu_n} \to U\) strongly in \({\rm L}^2(0,T;\mathbb{L}^2(\Omega))\) (see \cite{KOS1}).

Hence \(U\) is the desired solution of (AE)\(^\varepsilon\) and the uniqueness follows from the fact that \(\{U_\mu\}_{\mu>0}\) forms a Cauchy net in \({\rm C}([0,T];\mathbb{L}^2(\Omega))\).
\end{proof}

\section{Proofs of Theorems \ref{lwpgd} and \ref{altgd}}
In this section we establish local (in time) a priori estimates for solutions \(\{U^\varepsilon\}_{\varepsilon>0}\) of auxiliary equations (AE)\(^\varepsilon\) in order to show the existence of the unique local solution of (ACGL).
In the sequel, we assume \(2<q<r<2^*\).

\begin{Lem}
Let \(U=U^\varepsilon\) be the solution of (AE)\(^\varepsilon\).\label{eAEe}
Then there exist \(C_3\) and \(T_0>0\) depending only on \(\lambda, \kappa, \beta, \gamma\), \(|U_0|_{\mathbb{L}^2}, \varphi(U_0)\) and \(\|F\|_{\mathcal{H}^T}\) but not on \(\varepsilon\) such that
\begin{equation}
\label{eAEe1}
\begin{aligned}
&\sup_{t \in [0, T_0]}|U(t)|_{\mathbb{L}^2}^2
+ \sup_{t \in [0, T_0]}\varphi(U(t)) 
+ \int_{0}^{T_0}|\partial\varphi(U(t))|_{\mathbb{L}^2}^2dt\leq C_3.
\end{aligned}
\end{equation}
\end{Lem}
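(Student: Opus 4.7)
The aim is to derive estimates on $U = U^\varepsilon$ uniform in $\varepsilon>0$ on a short interval $[0, T_0]$. The structural fact being exploited is that the dissipative perturbation $\varepsilon\partial\psi_r(U)$ contributes non-negatively whenever tested against either $U$ or $\partial\varphi(U)$: the identity $(\partial\psi_r(U), U)_{\mathbb{L}^2} = |U|_{\mathbb{L}^r}^r \geq 0$ and the angle condition \eqref{angle} for the pair $(\partial\varphi, \partial\psi_r)$ both allow it to be discarded from the left-hand side. Consequently, in contrast with the global estimates of Lemmas \ref{1eAEm}--\ref{2eAEm}, no $\varepsilon$-dependent constant will appear on the right-hand side of the resulting energy inequalities.

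First I would take the $\mathbb{L}^2$-inner product of (AE)$^\varepsilon$ with $U$. Skew-symmetry \eqref{orth:IU} eliminates the $\alpha I\partial\varphi(U)$ and $\beta I\partial\psi_q(U)$ contributions, and dropping $r\varepsilon\psi_r(U) \geq 0$ yields
\[
\tfrac{1}{2}\tfrac{d}{dt}|U|_{\mathbb{L}^2}^2 + 2\lambda\varphi(U) \leq \kappa q\psi_q(U) + \gamma_+|U|_{\mathbb{L}^2}^2 + |F|_{\mathbb{L}^2}|U|_{\mathbb{L}^2}.
\]
Next I take the inner product with $\partial\varphi(U)$; again \eqref{orth:IU} kills the $\alpha$-term, \eqref{angle} lets us drop the $\varepsilon$-term, and Cauchy-Schwarz together with Young's inequality give
\[
\tfrac{d}{dt}\varphi(U) + \tfrac{\lambda}{2}|\partial\varphi(U)|_{\mathbb{L}^2}^2 \leq C|\partial\psi_q(U)|_{\mathbb{L}^2}^2 + 2\gamma_+\varphi(U) + C|F|_{\mathbb{L}^2}^2,
\]
where I have used $(U, \partial\varphi(U))_{\mathbb{L}^2} = 2\varphi(U)$, and $C$ depends only on $\lambda, \kappa, \beta$.

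To close the estimate, the nonlinear factor is controlled by the Gagliardo--Nirenberg inequality: since $q < 2^*$, a direct arithmetic check shows that $2(q-1)$ still lies inside the Sobolev range of $\mathbb{H}^2 \cap \mathbb{H}^1_0$, so
\[
|\partial\psi_q(U)|_{\mathbb{L}^2}^2 = |U|_{\mathbb{L}^{2(q-1)}}^{2(q-1)} \leq C|\partial\varphi(U)|_{\mathbb{L}^2}^{N(q-2)/2}|U|_{\mathbb{L}^2}^{2(q-1) - N(q-2)/2}.
\]
Setting $y(t) := |U(t)|_{\mathbb{L}^2}^2 + 2\varphi(U(t))$, adding a suitable multiple of the $\mathbb{L}^2$-inequality to the $\mathbb{H}^1$-inequality, and applying Young once more, I expect a Bernoulli-type differential inequality
\[
y'(t) + c|\partial\varphi(U(t))|_{\mathbb{L}^2}^2 \leq C_1 y(t)^p + C_2|F(t)|_{\mathbb{L}^2}^2
\]
with some $p > 1$. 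Comparison with the associated scalar Bernoulli ODE then furnishes a time $T_0 > 0$, depending only on $y(0)$, $\|F\|_{\mathcal{H}^T}$ and the parameters, on which $y$ remains bounded; integrating over $[0, T_0]$ finally yields the remaining bound on $\int_0^{T_0}|\partial\varphi(U(t))|_{\mathbb{L}^2}^2\,dt$.

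\textbf{Main obstacle.} The delicate point is the range $q \geq 2 + 4/N$, where $N(q-2)/2 \geq 2$ prevents absorbing the Gagliardo--Nirenberg factor into $|\partial\varphi(U)|_{\mathbb{L}^2}^2$ by plain Young; one must then accept a genuine super-linear Bernoulli inequality, which is precisely why $T_0$ is only local. A secondary concern is to ensure that the non-monotone $(\kappa+\beta I)\partial\psi_q(U)$ contribution in the $\mathbb{H}^1$-estimate is handled purely by Cauchy--Schwarz with constants depending only on $\kappa, \beta, \lambda$, so that no $\varepsilon$-dependent quantity slips into $C_3$.
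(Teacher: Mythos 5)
Your overall strategy coincides with the paper's: test with \(U\) and with \(\partial\varphi(U)\), discard the \(\varepsilon\partial\psi_r\) terms by positivity and by the angle condition \eqref{angle}, control \(|\partial\psi_q(U)|_{\mathbb{L}^2}^2=|U|_{\mathbb{L}^{2(q-1)}}^{2(q-1)}\) by interpolation, and close with a superlinear Gronwall-type argument (your Bernoulli ODE comparison is essentially the lemma from \cite{O5} that the paper invokes). However, there is a genuine gap at the interpolation step, and it is precisely the point you flag as the ``main obstacle'' without actually resolving it. Your Gagliardo--Nirenberg estimate interpolates \(\mathbb{L}^{2(q-1)}\) between \(|\Delta U|_{\mathbb{L}^2}\) and \(|U|_{\mathbb{L}^2}\), which forces the exponent \(N(q-2)/2\) on \(|\partial\varphi(U)|_{\mathbb{L}^2}\). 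When \(q\geq 2+4/N\) (a nonempty subrange of \(2<q<2^*\) for every \(N\)), this exponent is \(\geq 2\), so the term can neither be absorbed into \(\frac{\lambda}{2}|\partial\varphi(U)|_{\mathbb{L}^2}^2\) by Young nor be dominated by a power of \(y(t)=|U|_{\mathbb{L}^2}^2+2\varphi(U)\): the quantity \(|\partial\varphi(U)|_{\mathbb{L}^2}\) is simply not controlled by \(y\). ``Accepting a super-linear Bernoulli inequality'' does not help here, because the offending factor is the Laplacian norm, not \(y\); the differential inequality does not close and the argument fails on that range of \(q\).

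The paper avoids this by using a three-term interpolation (inequality \eqref{LKJH}, from \cite{OS1}), which estimates \(|U|_{\mathbb{L}^{2(q-1)}}^{2(q-1)}\) by \(|\Delta U|_{\mathbb{L}^2}^{2-\theta}|\nabla U|_{\mathbb{L}^2}^{2q-4+\theta}+|U|_{\mathbb{L}^2}^{2(q-1)}\) with \(\theta=2q-N(q-2)\in(0,2]\) for all subcritical \(q\); that is, it interpolates between \(\dot{\mathbb{H}}^2\) and \(\dot{\mathbb{H}}^1\) rather than between \(\dot{\mathbb{H}}^2\) and \(\mathbb{L}^2\). This keeps the exponent on \(|\Delta U|_{\mathbb{L}^2}\) strictly below \(2\) for the whole range \(2<q<2^*\), so Young's inequality absorbs it into \(\frac{\lambda}{4}|\partial\varphi(U)|_{\mathbb{L}^2}^2\), leaving only \(\varphi(U)^\rho\) with \(\rho>1\) --- harmless, since it feeds into the non-decreasing function \(l\) in the superlinear Gronwall lemma. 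To repair your proof you should replace your two-term interpolation by this one. A secondary caveat: in a general unbounded domain the purely multiplicative Gagliardo--Nirenberg inequality with \(|\Delta U|_{\mathbb{L}^2}\) alone in place of the full \(\mathbb{H}^2\)-norm is not available (no Poincar\'e inequality), which is why the paper's version carries the additive lower-order term \(|U|_{\mathbb{L}^2}^{2(q-1)}\); your estimate should carry one too.
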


\begin{proof}
Multiplying (AE)\(^\varepsilon\) by \(U\) and \(\partial\varphi(U)\) and using Young's inequality, \eqref{orth:IU}, \eqref{angle} and \eqref{GNH1}, we obtain the following inequalities respectively:
\begin{align}\label{Poi}
&\begin{aligned}[t]
\frac{1}{2}\frac{d}{dt}|U|_{\mathbb{L}^2}^2
+
2\lambda\varphi(U)
&\leq
(\gamma_++1)|U|_{\mathbb{L}^2}^2
+\frac{1}{4}|F|_{\mathbb{L}^2}^2
+q\kappa\psi_q(U)\\
&\leq
(\gamma_++1)|U|_{\mathbb{L}^2}^2
+|F|_{\mathbb{L}^2}^2
+\kappa
\left\{
\lambda
\varphi(U)
+
\bar{C}
|U|_{\mathbb{L}^2}^2
\right\}^{\frac{q}{2}},
\end{aligned}\\
&\begin{aligned}[t]
\frac{d}{dt}\varphi(U)
+
\frac{\lambda}{2}|\partial\varphi(U)|_{\mathbb{L}^2}^2
&\leq
2\gamma_+\varphi(U)
+\frac{1}{\lambda}|F|_{\mathbb{L}^2}^2
+\frac{\kappa^2+\beta^2}{\lambda}|\partial\psi_q(U)|_{\mathbb{L}^2}^2\\
&\leq
\begin{aligned}[t]
&2\gamma_+\varphi(U)
+\frac{1}{\lambda}|F|_{\mathbb{L}^2}^2\\
&+
\frac{k}{\lambda}(\kappa^2+\beta^2)
\left[
|\partial\varphi(U)|_{\mathbb{L}^2}^{2-\theta}\cdot
(2\varphi(U))^{\frac{2q-4+\theta}{2}}
+
|U|_{\mathbb{L}^2}^{2(q-1)}
\right],
\end{aligned}
\end{aligned}
\end{align}
where we used (in \cite{OS1})
\begin{align}\label{LKJH}
&
|\partial\psi(U)_q(U)|_{\mathbb{L}^2}^2
=
|U|_{\mathbb{L}^{2(q-1)}}^{2(q-1)}
\leq
k
(
|\Delta U|_{\mathbb{L}^2}^{2-\theta}|\nabla U|_{\mathbb{L}^2}^{2q-4+\theta}+|U|_{\mathbb{L}^2}^{2(q-1)}
),
\\
\notag
&
\theta
=
\left\{
\begin{aligned}
&2&&\mbox{if}\ N=1,2\ \mbox{or if}\ N\geq 3\ \mbox{and}\ q\in\left(2,\frac{2N-2}{N-2}\right],
\\
&2q-N(q-2)&&\mbox{if}\ N\geq 3\ \mbox{and}\ \frac{2N-2}{N-2}<q,
\end{aligned}
\right.
\end{align}
where \(k\) is a constant depending on \(q\), \(N\) and \(\Omega\).

Hence, since \(\frac{2N-2}{N-2}<q<2^*\) implies that \(\theta\in(0,2)\), by Young's inequality, there exists \(C_0>0\) such that
\begin{equation}\label{POi}
\frac{d}{dt}\varphi(U)
+
\frac{\lambda}{4}|\partial\varphi(U)|_{\mathbb{L}^2}^2
\leq
2\gamma_+\varphi(U)
+\frac{1}{\lambda}|F|_{\mathbb{L}^2}^2\\
+C_0
\left(
|U|_{\mathbb{L}^2}^{2(q-1)}
+
\varphi(U)^\rho
\right)
\end{equation}
with \(\rho=1+\frac{2(q-2)}{\theta}>q-1\).

We add \eqref{Poi} and \eqref{POi} together to obtain
\begin{equation}\label{POI}
\frac{1}{2}\frac{d}{dt}|U|_{\mathbb{L}^2}^2
+
\frac{d}{dt}\varphi(U)
+
\frac{\lambda}{4}|\partial\varphi(U)|_{\mathbb{L}^2}^2
\leq
\left(1+\frac{1}{\lambda}\right)|F|_{\mathbb{L}^2}^2
+l\left(
|U|_{\mathbb{L}^2}^2+2\varphi(U)\right),
\end{equation}
where \(l(s)=(2(\gamma_++1)s
+
\kappa\left(\frac{\lambda}{2}+2\bar{C}\right)^\frac{q}{2}2s^{\frac{q}{2}}
+
C_0\{(2s)^{2(q-1)}
+
s^\rho\}\) is a non-decreasing function.

Here we recall the following lemma:
\begin{Lem*}[(\^Otani \cite{O5}, p. 360. Lemma 2.2)]
Let \(y(t)\) be a bounded measurable non-negative function on \([0, T]\) and suppose that there exist \(y_0 \geq 0\) and a monotone non-decreasing function \(m(\cdot): [0, +\infty) \to [0, +\infty)\) such that
\begin{equation}
\label{O5Lem22:1}
y(t) \leq y_0 + \int_0^tm(y(s))ds\quad\mbox{a.e.}\ t \in (0, T).
\end{equation}
Then there exists a number \(S = S(y_0, m(\cdot)) \in (0, T]\) such that
\begin{equation}
\label{O5Lem22:2}
y(t) \leq y_0 + 1\quad\mbox{a.e.}\ t \in [0, S].
\end{equation}
\end{Lem*}
We apply the above lemma with \(y(t) = |U(t)|_{\mathbb{L}^2}^2 + 2\varphi(U(t))\), \(y_0=|U_0|_{\mathbb{L}^2}^2 + 2\varphi(U_0)+2\left(1+\frac{1}{\lambda}\right)\|F\|_{\mathcal{H}^T}^2\) and \(m(\cdot)=2l(\cdot)\) so that we obtain \eqref{eAEe1} with
\begin{equation}\label{Szero}
T_0 = S\left(|U_0|_{\mathbb{L}^2}^2 + 2\varphi(U_0)+2\left(1+\frac{1}{\lambda}\right)\|F\|_{\mathcal{H}^T}^2, 2l(\cdot)\right).
\end{equation}
\end{proof}

\begin{proof}[Proof of Theorem \ref{lwpgd}]
By a priori estimate \eqref{eAEe1}, inequality \eqref{LKJH}with \(q\) replaced by \(r\) and assumption \(2 < q < r < 2^*\), the following estimates can be derived as well
\begin{equation}\label{aaa}
\begin{aligned}
&\sup_{t\in[0,T_0]}\psi_q(U(t))
+\sup_{t\in[0,T_0]}\psi_r(U(t))\\
&+\int_{0}^{T_0}|\partial\psi_r(U(t))|_{\mathbb{L}^2}^2dt
+\int_{0}^{T_0}|\partial\psi_q(U(t))|_{\mathbb{L}^2}^2dt
\leq C_3.
\end{aligned}
\end{equation}

Moreover we have the strong convergence of \(\{U^\varepsilon\}_{\varepsilon>0}\) in \({\rm C}([0,T];\mathbb{L}^2(\Omega))\).
Indeed, multiplying the difference of two equations \({\rm (AE)}^\varepsilon-{\rm (AE)}^{\varepsilon'}\) by \(U^\varepsilon - U^{\varepsilon'}\), using the self-adjoint property of \(\partial\varphi\), (\ref{orth:IU}) and by the same argument as in \eqref{gfdsa}
, we get
\begin{equation}
\label{DUeUep}
\begin{aligned}
&\frac{1}{2}\frac{d}{dt}|U^\varepsilon - U^{\varepsilon'}|_{\mathbb{L}^2}^2 + 2\lambda\varphi(U^\varepsilon - U^{\varepsilon'})
+ (\varepsilon\partial\psi_r(U^\varepsilon) - \varepsilon'\partial\psi_r(U^{\varepsilon'}), U^\varepsilon - U^{\varepsilon'})_{\mathbb{L}^2}\\
&\leq \gamma_+|U^\varepsilon - U^{\varepsilon'}|_{\mathbb{L}^2}^2
+ ((\kappa + I\beta)(\partial\psi_q(U^\varepsilon) - \partial\psi_q(U^{\varepsilon'})), U^\varepsilon - U^{\varepsilon'})_{\mathbb{L}^2}\\
&\leq
\gamma_+|U^\varepsilon - U^{\varepsilon'}|_{\mathbb{L}^2}^2 + \tilde{\tilde{C}}(\psi_q(U^\varepsilon)^{\frac{q - 2}{q}}
+ \psi_q(U^{\varepsilon'})^{\frac{q - 2}{q}})|U^\varepsilon - U^{\varepsilon'}|_{\mathbb{L}^q}^2
+\lambda\varphi(U^\varepsilon-U^{\varepsilon'}),
\end{aligned}
\end{equation}
where the constant \(\tilde{\tilde{C}}=\tilde{C}q^{\frac{q-2}{q}}\) depends only on \(q, \kappa, \beta\).

We here assume \(\varepsilon < \varepsilon'\) without loss of generality.
By monotonicity of \(\partial\psi_r\) and the definition of subdifferential operators, we obtain
\begin{equation}
\label{eeprime}
\begin{aligned}
&(\varepsilon\partial\psi_r(U^\varepsilon) - \varepsilon'\partial\psi_r(U^{\varepsilon'}), U^\varepsilon - U^{\varepsilon'})_{\mathbb{L}^2}\\
&= \varepsilon(\partial\psi_r(U^\varepsilon) - \partial\psi_r(U^{\varepsilon'}), U^\varepsilon - U^{\varepsilon'})_{\mathbb{L}^2} + (\varepsilon - \varepsilon')(\partial\psi_r(U^{\varepsilon'}), U^\varepsilon - U^{\varepsilon'})_{\mathbb{L}^2}\\
&\geq (\varepsilon - \varepsilon')(\psi_r(U^\varepsilon) - \psi_r(U^{\varepsilon'})).
\end{aligned}
\end{equation}
Then in view of \eqref{aaa}, \eqref{DUeUep} and \eqref{eeprime}, we obtain
\begin{equation}
\label{DUeUep1}
\begin{aligned}
&\frac{1}{2}\frac{d}{dt}|U^\varepsilon - U^{\varepsilon'}|_{\mathbb{L}^2}^2 + \lambda\varphi(U^\varepsilon - U^{\varepsilon'})\\
&\leq
\begin{aligned}[t]
&\gamma_+|U^\varepsilon - U^{\varepsilon'}|_{\mathbb{L}^2}^2 + \tilde{\tilde{C}}(\psi_q(U^\varepsilon)^{\frac{q - 2}{q}} + \psi_q(U^{\varepsilon'})^{\frac{q - 2}{q}})|U^\varepsilon - U^{\varepsilon'}|_{\mathbb{L}^q}^2\\
&+ (\varepsilon' - \varepsilon)(\psi_r(U^\varepsilon) - \psi_r(U^{\varepsilon'})).
\end{aligned}\\
&\leq
\gamma_+|U^\varepsilon - U^{\varepsilon'}|_{\mathbb{L}^2}^2 + 2\tilde{\tilde{C}}C_3^{\frac{q - 2}{q}}
|U^\varepsilon - U^{\varepsilon'}|_{\mathbb{L}^q}^2
+ (\varepsilon' - \varepsilon)C_3.
\end{aligned}
\end{equation}
Applying \eqref{GNH1} and Young's inequality to \eqref{DUeUep1}, we see that there exists a constant \(C_4\) such that
\[
\begin{aligned}
&\frac{1}{2}\frac{d}{dt}|U^\varepsilon - U^{\varepsilon'}|_{\mathbb{L}^2}^2 + \lambda\varphi(U^\varepsilon - U^{\varepsilon'})\\
&\leq
\gamma_+|U^\varepsilon - U^{\varepsilon'}|_{\mathbb{L}^2}^2 + \frac{\lambda}{2}
|\nabla(U^\varepsilon - U^{\varepsilon'})|_{\mathbb{L}^2}^2
+C_4|U^\varepsilon - U^{\varepsilon'}|_{\mathbb{L}^2}^2
+ (\varepsilon' - \varepsilon)C_3.
\end{aligned}
\]
Thus by Gronwall's inequality, we can conclude that \(\{U^\varepsilon\}_{\varepsilon > 0}\) forms a Cauchy net in \(\mathrm{C}([0,T_0];\mathbb{L}^2(\Omega))\).

By a priori estimates \eqref{eAEe1} and \eqref{aaa}, we can extract a subsequence \(\{U^{\varepsilon_n}\}_{n\in\mathbb{N}}\subset\{U^\varepsilon\}_{\varepsilon>0}\) such that:
\begin{align}
U^{\varepsilon_n}&\rightarrow U&&\mbox{strongly in}\ {\rm C}([0, T_0]; \mathbb{L}^2(\Omega)),\\
\frac{dU^{\varepsilon_n}}{dt}&\rightharpoonup \frac{dU}{dt}&&\mbox{weakly in}\ {\rm L}^2(0, T_0; \mathbb{L}^2(\Omega)),\\
\partial\varphi(U^{\varepsilon_n})&\rightharpoonup \partial\varphi(U)&&\mbox{weakly in}\ {\rm L}^2(0, T_0; \mathbb{L}^2(\Omega)),\\
\varepsilon_n\partial\psi_r(U^{\varepsilon_n})&\rightarrow 0&&\mbox{strongly in}\ {\rm L}^2(0, T_0; \mathbb{L}^2(\Omega)),\\
\partial\psi_q(U^{\varepsilon_n})&\rightharpoonup \partial\psi_q(U)&&\mbox{weakly in}\ {\rm L}^2(0, T_0; \mathbb{L}^2(\Omega)),
\end{align}
where we used the demi-closedness of \(\frac{d}{dt}, \partial\varphi, \partial\psi_q\).
Then wee see that \(U\) is the desired solution of (ACGL).

The uniqueness part follows from the fact that \(\{U^\varepsilon\}_{\varepsilon>0}\) forms a Cauchy net.

\end{proof}
\begin{proof}[Proof of Theorem \ref{altgd}]
We shall proceed the proof of Theorem \ref{altgd} by contradiction.

Let \(T_m<T\) and \(\liminf_{t\uparrow T_m}\left\{|U(t)|_{\mathbb{L}^2}^2+2\varphi(U(t))\right\}<+\infty\).
Then there exists a increasing sequence \(\{t_n\}_{n\in\mathbb{N}}\) and a positive number \(K_0>0\) which is independent of \(n\) such that
\begin{align}
\label{convtn}
&t_n\uparrow T_m\\
\intertext{and}
&|U(t_n)|_{\mathbb{L}^2}^2+2\varphi(U(t_n))\leq K_0\quad\mbox{for all}\ n\in\mathbb{N}.
\end{align}

We repeat the same argument as in the proof of Theorem \ref{lwpgd} with \(t_0\) replaced by \(t_n\) (\(n\in\mathbb{N}\)).
Then by \eqref{Szero}, \(U(t)\) can be continued up to \(t_n+T_0\) as a solution of (ACGL), where
\[
0<T_0=S\left(K_0+2\left(1+\frac{1}{\lambda}\right)\|F\|_{\mathcal{H}^T}^2,2l(\cdot)\right),
\]
which is independent of \(n\).
By \eqref{convtn}, we can take a sufficiently large number \(n\in\mathbb{N}\) such that \(T_m<t_n+T_0\), which contradicts the definition of \(T_m\).
\end{proof}

\section{Proof of Theorem \ref{gegd}}
First we recall the following lemma.
\begin{Lem2}[(c.f. \cite{KO2} Lemma 7)]
   Let \(f(t) \in {\rm L}^1(0, T)\) and \(j(t)\) be an absolutely \label{Gtyineq}
     continuous positive function on \([0, S]\) with \(0 < S \leq T\) such that
\begin{equation}\label{7.5}
    \frac{d}{dt} j(t) + \delta j(t) \leq K |f(t)|   \quad \mbox{a.e.}\ t \in [0, S],
\end{equation}
where \(\delta > 0\) and \(K > 0\). Then we have
\begin{equation}\label{7.6}
  \begin{aligned}
    & j(t) \leq j(0) ~\! e^{-\delta t} 
       + \frac{K}{1 - e^{-\delta}} ~\! \opnorm{f}_1 
           \quad  \forall t \in [0, S],
\\
    & \opnorm{f}_1 = \sup\left\{\int_S^{S + 1}|\tilde{f}(t)|dt 
        \mathrel{;} 0 \leq S < \infty\right\},
\end{aligned}
\end{equation}
where \(\tilde{f}\) is the zero extension of \(f\) to \([0, \infty)\).
\end{Lem2}

Hence in order to give an estimate for the real parts of equation (CGL) from below, we prepare the following lemma.
\begin{Lem}
Let all assumptions in Theorem \ref{gegd} be satisfied. \label{coergd}
Then there exists \(\varepsilon_0 > 0\) and \(\delta > 0\) such that for all \(U \in D(\varphi) = \mathbb{H}_0^1(\Omega)\) satisfying \(\frac{1}{2}|U|_{\mathbb{L}^2}^2+\varphi(U)\leq \varepsilon_0\), the following estimate holds.
\begin{equation}
\label{coer1}
(\lambda \partial \varphi U - \kappa \partial \psi_q U - \gamma U, U)_{\mathbb{L}^2} \geq \delta\left( \frac{1}{2}|U|_{\mathbb{L}^2}^2+\varphi(U)\right).
\end{equation}
\end{Lem}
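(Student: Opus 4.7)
The plan is to compute the left-hand side of \eqref{coer1} explicitly via the subdifferential formulas, and then to absorb the superquadratic contribution of $\partial\psi_q$ into the coercive quadratic terms using the smallness of $E := \varphi(U) + \tfrac{1}{2}|U|_{\mathbb{L}^2}^2$. Since $\partial\varphi(U) = -\Delta U$ and $\partial\psi_q(U) = |U|^{q-2}U$, one immediately obtains $(\partial\varphi(U), U)_{\mathbb{L}^2} = 2\varphi(U)$ and $(\partial\psi_q(U), U)_{\mathbb{L}^2} = |U|_{\mathbb{L}^q}^q$; combined with the hypothesis $\gamma<0$, so that $-\gamma = |\gamma|$, the left-hand side becomes
\[
(\lambda\partial\varphi(U) - \kappa\partial\psi_q(U) - \gamma U, U)_{\mathbb{L}^2}
= 2\lambda\varphi(U) + |\gamma|\,|U|_{\mathbb{L}^2}^2 - \kappa|U|_{\mathbb{L}^q}^q.
\]
Setting $c := 2\min(\lambda,|\gamma|) > 0$, the first two terms dominate $cE$.

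For the remaining term, I would invoke the standard Gagliardo--Nirenberg inequality, which holds on any (possibly unbounded) domain for $U \in \mathbb{H}_0^1(\Omega)$ provided $2<q<2^*$:
\[
|U|_{\mathbb{L}^q} \leq C_{GN}\,|\nabla U|_{\mathbb{L}^2}^{a}\,|U|_{\mathbb{L}^2}^{1-a},
\qquad a = N\bigl(\tfrac{1}{2}-\tfrac{1}{q}\bigr) \in (0,1).
\]
Raising to the $q$th power and using $|\nabla U|_{\mathbb{L}^2}^2 \leq 2E$, $|U|_{\mathbb{L}^2}^2 \leq 2E$, one obtains
\[
\kappa|U|_{\mathbb{L}^q}^q
\leq \kappa C_{GN}^q\,(2E)^{aq/2}(2E)^{(1-a)q/2}
= C' E^{q/2},
\qquad C' := \kappa C_{GN}^q\,2^{q/2}.
\]
The crucial point is that $q>2$ renders this genuinely higher order in $E$.

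Combining the two estimates yields
\[
(\lambda\partial\varphi(U) - \kappa\partial\psi_q(U) - \gamma U, U)_{\mathbb{L}^2}
\geq cE - C' E^{q/2}
= E\bigl(c - C' E^{(q-2)/2}\bigr).
\]
One then chooses $\varepsilon_0 := (c/(2C'))^{2/(q-2)}$ so that $C'\varepsilon_0^{(q-2)/2} \leq c/2$; whenever $E \leq \varepsilon_0$, the bracketed quantity is at least $c/2$, giving \eqref{coer1} with $\delta := c/2 = \min(\lambda,|\gamma|)$. The argument is essentially a linearization about zero, and I do not anticipate any substantive obstacle: the only delicate point is ensuring that Gagliardo--Nirenberg is available uniformly on a general uniformly $\mathrm{C}^2$-regular domain, which follows from the Sobolev embedding $\mathbb{H}_0^1(\Omega) \hookrightarrow \mathbb{L}^{2^*}(\Omega)$ (extending by zero) interpolated with $\mathbb{L}^2$ and does not require boundedness of $\Omega$.
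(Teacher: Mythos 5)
Your proposal is correct and follows essentially the same route as the paper: compute the pairing as $2\lambda\varphi(U)+|\gamma||U|_{\mathbb{L}^2}^2-\kappa|U|_{\mathbb{L}^q}^q$, bound the quadratic part below by $2\min\{\lambda,|\gamma|\}\,E$, control $|U|_{\mathbb{L}^q}^q$ by $C\,E^{q/2}$ via the $\mathbb{H}_0^1\hookrightarrow\mathbb{L}^q$ embedding (the paper uses its inequality \eqref{GNH1} in Sobolev form rather than the interpolated Gagliardo--Nirenberg form, but after crushing both factors to $E$ this is the same bound), and choose $\varepsilon_0$ so the higher-order term is absorbed. The resulting $\delta=\min\{\lambda,|\gamma|\}$ matches the paper's choice.
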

\begin{proof}
We multiply \(\lambda \partial \varphi(U) - \kappa\partial\psi_q(U) - \gamma U\) by \(U\).
Then we get by (\ref{GNH1})
\begin{equation}
\label{ReUgd}
\begin{aligned}
&(\lambda\partial\varphi(U) - \kappa\partial\psi_q(U) - \gamma U, U)_{\mathbb{L}^2}\\
&= 2\lambda\varphi(U) - q\kappa\psi_q(U) - \gamma |U|_{\mathbb{L}^2}^2\\
&\geq 2\delta\left(\frac{1}{2}|U|_{\mathbb{L}^2}^2+\varphi(U)\right)-\kappa C_b^\frac{q}{2}(|U|_{\mathbb{L}^2}^2+2\varphi(U))^\frac{q}{2}\\
&\geq 
\left(2\delta-\kappa C_b^\frac{q}{2} 2^\frac{q}{2} \left(\frac{1}{2}|U|_{\mathbb{L}^2}^2+\varphi(U)\right)^\frac{q-2}{2}\right)
\left(\frac{1}{2}|U|_{\mathbb{L}^2}^2+\varphi(U)\right),
\end{aligned}
\end{equation}
where \(\delta=\min\{\lambda,|\gamma|\}>0\).
Then choosing \(\varepsilon_0=\left(\frac{\delta}{\kappa C_b^\frac{q}{2}2^\frac{q}{2}}\right)^{\frac{2}{q-2}}\), we obtain \eqref{coer1}.
\end{proof}

With the aid of Lemma \ref{coergd}, we ca derive the global boundedness of \(\frac{1}{2}|U(t)|_{\mathbb{L}^2}^2+\varphi(U(t))\) for small initial data.
\begin{Lem}
Let all assumptions in Theorem \ref{gegd} be satisfied. \label{gbddgd}
Then there exist \(\varepsilon_1>0\) and \(L>0\) independent of \(T\) such that for any \(r\in(0,\varepsilon_1)\), if \(\frac{1}{2}|U_0|_{\mathbb{L}^2}^2 + \varphi(U_0) \leq r^2\) and \(\opnorm{F}_2 \leq r\), then the corresponding solution \(U(t)\) on \([0, S]\), \(0 < S \leq T\) satisfies
\begin{equation}
\label{gbddgd1}
\frac{1}{2}|U(t)|_{\mathbb{L}^2}^2 + \varphi(U(t)) < Lr^2\quad \forall t \in [0, S].
\end{equation}
\end{Lem}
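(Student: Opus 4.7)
The plan is to derive a global-in-time differential inequality for $j(t) := \frac{1}{2}|U(t)|_{\mathbb{L}^2}^2 + \varphi(U(t))$ of the form $\frac{d}{dt}j(t) + \delta' j(t) \leq C_0|F(t)|_{\mathbb{L}^2}^2$ and then invoke Lemma~\ref{Gtyineq}; the bound $j(t) < Lr^2$ is then closed by a standard continuity (bootstrap) argument on the size of $j$. I would first test (ACGL) against $U + \partial\varphi(U)$; using the orthogonalities $(I\partial\varphi(U),U)_{\mathbb{L}^2} = (I\partial\varphi(U),\partial\varphi(U))_{\mathbb{L}^2} = (I\partial\psi_q(U),U)_{\mathbb{L}^2} = 0$ from \eqref{orth:IU}, this yields the identity
\[
\frac{d}{dt}j(t) + (\lambda\partial\varphi(U) - \kappa\partial\psi_q(U) - \gamma U, U)_{\mathbb{L}^2} + \lambda|\partial\varphi(U)|_{\mathbb{L}^2}^2 - 2\gamma\varphi(U) = \kappa(\partial\psi_q,\partial\varphi)_{\mathbb{L}^2} + \beta(I\partial\psi_q,\partial\varphi)_{\mathbb{L}^2} + (F,U+\partial\varphi(U))_{\mathbb{L}^2}.
\]

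For the bootstrap, with $L > 1$ to be chosen later, define $T^* := \sup\{t \in [0,S] : j(s) \leq Lr^2 \ \forall\, s \in [0,t]\}$; since $j(0) \leq r^2$, continuity gives $T^* > 0$. Choose $\varepsilon_1$ small enough that $L\varepsilon_1^2 \leq \varepsilon_0$, so that on $[0,T^*]$ Lemma~\ref{coergd} is available and produces $(\lambda\partial\varphi - \kappa\partial\psi_q - \gamma U, U)_{\mathbb{L}^2} \geq \delta\,j(t)$. Since $\gamma < 0$ makes $-2\gamma\varphi(U) \geq 0$, Young's inequality handles the cross and forcing terms,
\[
\kappa(\partial\psi_q,\partial\varphi) + \beta(I\partial\psi_q,\partial\varphi) \leq \tfrac{\lambda}{2}|\partial\varphi|_{\mathbb{L}^2}^2 + \tfrac{\kappa^2+\beta^2}{2\lambda}|\partial\psi_q|_{\mathbb{L}^2}^2, \qquad (F,U+\partial\varphi) \leq \tfrac{\delta}{2}j(t) + \tfrac{\lambda}{4}|\partial\varphi|_{\mathbb{L}^2}^2 + C_0|F|_{\mathbb{L}^2}^2,
\]
while \eqref{LKJH} followed by another Young step gives $|\partial\psi_q|_{\mathbb{L}^2}^2 \leq \varepsilon|\partial\varphi|_{\mathbb{L}^2}^2 + C\bigl(j(t)^{q-1} + j(t)^{(2q-4+\theta)/\theta}\bigr)$, after which all the $|\partial\varphi|^2$ contributions are absorbed into $\lambda|\partial\varphi|^2$ on the LHS.

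The crux is that the residual exponents $q-1$ and $(2q-4+\theta)/\theta$ are both strictly greater than $1$ (since $q > 2$), so on $[0,T^*]$ the superlinear terms satisfy $j(t)^{\rho_i} \leq (Lr^2)^{\rho_i - 1}\,j(t)$, which is dominated by $\tfrac{\delta}{4}j(t)$ provided $r < \varepsilon_1$ is chosen sufficiently small. This leaves
\[
\frac{d}{dt}j(t) + \frac{\delta}{4}j(t) \leq C_0|F(t)|_{\mathbb{L}^2}^2 \quad \text{on } [0,T^*].
\]
Applying Lemma~\ref{Gtyineq} with $\delta' = \delta/4$, $K = C_0$ and $f(t) = |F(t)|_{\mathbb{L}^2}^2$, and noting that $\opnorm{f}_1 = \opnorm{F}_2^2 \leq r^2$, we obtain $j(t) \leq r^2 + \frac{C_0}{1-e^{-\delta/4}}\,r^2 =: L_0 r^2$ on $[0,T^*]$. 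Setting $L := 2L_0$ closes the bootstrap: if $T^* < S$ then continuity forces $j(T^*) = Lr^2$, contradicting $j(T^*) \leq L_0 r^2 < Lr^2$; hence $T^* = S$ and the claim follows.

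The main technical obstacle is the cross term $\beta(I\partial\psi_q(U), \partial\varphi(U))_{\mathbb{L}^2}$: unlike $(\partial\psi_q, I\partial\psi_r)_{\mathbb{L}^2} = 0$ from \eqref{orth:Ipsi}, this inner product carries neither orthogonality nor a sign, so one is forced to estimate $|\partial\psi_q|_{\mathbb{L}^2}^2$ itself. For $q$ near $2^*$ this control is only accessible through the nonlinear interpolation \eqref{LKJH} which involves $|\partial\varphi|^{2-\theta}$ with $\theta \in (0,2)$; the careful bookkeeping that retains strict super-linearity of every residual power of $j$, so the bootstrap smallness can absorb them into the coercive LHS, is the subtle step and is what fixes $\varepsilon_1$.
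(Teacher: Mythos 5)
Your argument is correct and reaches the same conclusion, but it organizes the energy estimates differently from the paper. You test (ACGL) against $U+\partial\varphi(U)$ at once and derive a single dissipative inequality $\frac{d}{dt}j+\frac{\delta}{4}j\leq C_0|F|^2_{\mathbb{L}^2}$ for $j=\frac{1}{2}|U|^2_{\mathbb{L}^2}+\varphi(U)$, absorbing the superlinear remainders $j^{q-1}$ and $j^{1+2(q-2)/\theta}$ directly into the coercive term $\delta j$ via the bootstrap bound $j\leq Lr^2$ and the smallness of $r$; one application of Lemma \ref{Gtyineq} then closes the continuity argument. The paper instead proceeds in two stages: it first tests against $U$ alone to get $\frac{1}{2}\frac{d}{dt}|U|^2+\delta j\leq|F||U|$, from which Lemma \ref{Gtyineq} yields the uniform bound $|U(t)|_{\mathbb{L}^2}\leq L_1r$ \emph{and}, by integrating over unit intervals, the bound $\sup_t\int_t^{t+1}\tilde\varphi(U)\,d\tau\leq L_2r^2$; it then tests against $\partial\varphi(U)$, using the dissipation $2|\gamma|\varphi$ coming from $\gamma<0$ rather than Lemma \ref{coergd}, and controls the superlinear terms through the already-established $\mathbb{L}^2$ bound together with $\varphi^\rho\leq\varphi$ (valid since $Lr^2\leq1$) and the unit-interval integral, before invoking Lemma \ref{Gtyineq} a second time on $[\tilde t_1-1,t_1]$. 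Your one-shot version is shorter and avoids the intermediate uniform-Gronwall step, at the cost of requiring the full coercivity of Lemma \ref{coergd} for $j$ and a second smallness condition on $\varepsilon_1$ (beyond $L\varepsilon_1^2\leq\varepsilon_0$) to absorb the superlinear powers; since $L$ is fixed by $\delta$ and $C_0$ before $\varepsilon_1$ is chosen, there is no circularity and the argument is sound. The only points to make explicit in a full write-up are the Young step turning \eqref{LKJH} into $|\partial\psi_q(U)|^2_{\mathbb{L}^2}\leq\varepsilon|\partial\varphi(U)|^2_{\mathbb{L}^2}+C\bigl(j^{q-1}+j^{1+2(q-2)/\theta}\bigr)$ with $\varepsilon$ small enough that all $|\partial\varphi|^2$ contributions stay below $\lambda|\partial\varphi(U)|^2_{\mathbb{L}^2}$, and the absolute continuity of $t\mapsto\varphi(U(t))$ needed for Lemma \ref{Gtyineq}, which follows from the regularity (i)--(iii) of the solution.
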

\begin{proof}
We fix \(L\) and \(\varepsilon_1\) by
\begin{align}
    L  & = \left[2 + L_1^2+\frac{1}{1 - e^{-2|\gamma|}}\left\{\frac{1}{\lambda}+C_0(L_1^2+L_2)\right\}\right]
\intertext{(\(C_0\) is the constant appering in \eqref{POi}),}
\notag 
    L_2 & = \frac{1}{\delta} \left(L_1 + \frac{1}{2}L_1^2 \right), \quad L_1 
           = 1 + \frac{1}{1 - e^{- \frac{\delta}{2}}},
\\[1mm]
\label{7.11}
   \varepsilon_1 & = \frac{\varepsilon_0}{L} 
          \quad  \mbox{(\(\varepsilon_0\) is the number appearing in Lemma \ref{coergd}).}
\end{align}

Then we claim that (\ref{gbddgd1}) holds true for all \(t \in [0, S]\).
Suppose that this is not the case, then by the continuity of \(\frac{1}{2}|U(t)|_{\mathbb{L}^2}^2+\varphi(U(t))\), there exists \(t_1 \in (0, S)\) such that
\begin{equation}\label{6.8}
       \frac{1}{2}|U(t)|_{\mathbb{L}^2}^2+\varphi(U(t)) < Lr^2 \quad   \forall t \in [0, t_1)  
            \ \  \mbox{and} \ \ \frac{1}{2}|U(t_1)|_{\mathbb{L}^2}^2+\varphi(U(t_1)) = L r^2.
\end{equation}
We are going to show that this leads to a contradiction.
 We first multiply (ACGL) by \(U(t)\) for \(t \in [0, t_1]\).
Then since \(\frac{1}{2}|U(t)|_{\mathbb{L}^2}^2 +\varphi(U(t)) \leq Lr^2 \leq \varepsilon_0\) for all \(t \in [0, t_1]\), 
  Lemma \ref{coergd} and \eqref{orth:IU} gives
\begin{align}\label{7.13}
    & \frac{1}{2}\frac{d}{dt}|U(t)|_{\mathbb{L}^2}^2 
        + \delta\left(\frac{1}{2}|U(t)|_{\mathbb{L}^2}^2+\varphi(U(t))\right) \leq |F(t)|_{\mathbb{L}^2}|U(t)|_{\mathbb{L}^2} 
          & \forall t \in [0, t_1], 
\end{align}
Hence, by (\ref{7.13}) and Lemma \ref{Gtyineq}, we get
\begin{equation}\label{6.10}
    \sup_{0 \leq t \leq t_1}|U(t)|_{\mathbb{L}^2} 
         \leq \left(1 
                 + \frac{1}{1 - e^{-\frac{\delta}{2}}}\right)\varepsilon_0 = L_1 r,
\end{equation}
where we used the fact that 
  \(|U(0)|_{\mathbb{L}^2} \leq \varepsilon_0\) 
                                and \( \opnorm{|F(t)|_{\mathbb{L}^2}}_1 = \opnorm{F}_2 \leq r\).

Hence the integration of (\ref{7.13}) over \((t, t+1)\) gives
\begin{equation}\label{7.16}
    \sup_{0 \leq t < \infty} \int_t^{t + 1} \tilde{\varphi}(U(\tau))d\tau 
              \leq \frac{1}{\delta} \left( L_1 r^2 + \frac{1}{2} ~\! L_1^2 ~\! r^2 \right) = L_2 ~\! r^2,
\end{equation}
where \(\tilde{\varphi}(U(\cdot))\) is the zero extension of \(\varphi(U(\cdot))\) to \([0, \infty)\).

By the same argument as for \eqref{POi}, we have
\begin{equation}\label{7.18}
\begin{aligned}
&\frac{d}{dt} \varphi(U) + 2|\gamma| \varphi(U) +\frac{\lambda}{4}|\partial\varphi(U)|_{\mathbb{L}^2}^2\\
&\leq \frac{1}{\lambda} |F|_{\mathbb{L}^2}^2 +C_0\left(\frac{1}{2}|U|_{\mathbb{L}^2}^{2(q-1)}+\varphi(U)^\rho\right)
              \quad \forall t \in [0, t_1].
\end{aligned}
\end{equation}
Without loss of generality, we can take \(Lr \leq L\varepsilon_1 = \varepsilon_0 \leq 1\).
Then since \(\rho > 1\), in view of \eqref{6.10}, \eqref{7.16} and Lemma \ref{Gtyineq}, we integrate \eqref{7.18} on \([\tilde{t}_1-1,t_1]\) with \(\tilde{t}_1=\max\{1,t_1\}\) to obtain
\begin{equation}
\frac{1}{2}
|U(t_1)|_{\mathbb{L}^2}^2
+
\varphi(U(t_1)) 
\leq \left[1 + \frac{1}{2}L_1^2+ \frac{1}{1 - e^{-2|\gamma|}}\left\{\frac{1}{\lambda}+C_0(L_1^2+L_2)\right\}\right]r^2 < Lr^2,
\end{equation}
which together with \eqref{6.10} contradicts \eqref{6.8}.
\end{proof}

\begin{proof}[Proof of Theorem \ref{gegd}]
Theorem \ref{gegd} is a direct consequence of the uniform boundedness of \(|U|_{\mathbb{L}^2}^2 + 2\varphi(U)\) based on Lemma \ref{gbddgd} and Theorem \ref{altgd}.
\end{proof}


\end{document}